\documentclass{amsart}
\usepackage[utf8]{inputenc}
\usepackage{amsmath}
\usepackage{amssymb}
\usepackage{amsthm}
\usepackage{color}
\usepackage{graphicx}
\usepackage{tabularx}
\usepackage{physics}
\usepackage{tikz-cd}
\usepackage{subfigure}
\usepackage{soul} % for strikethrough
\usepackage[colorinlistoftodos]{todonotes}
\usepackage[colorlinks=true, allcolors=blue]{hyperref}

\usepackage{mathrsfs}

\newtheorem{theorem}{Theorem}[section]
\newtheorem{lemma}[theorem]{Lemma}
\newtheorem{proposition}[theorem]{Proposition}
\newtheorem{corollary}[theorem]{Corollary}

\newtheorem{remark}[theorem]{Remark}

\def\e{\epsilon}

\DeclareMathOperator{\cA}{{\mathcal A}}
\DeclareMathOperator{\cB}{{\mathcal B}}
\DeclareMathOperator{\cS}{{\mathcal S}}
\DeclareMathOperator{\cT}{{\mathcal T}}
\DeclareMathOperator{\cR}{{\mathcal R}}

\renewcommand{\min}{min}

\usepackage[a4paper,top=3cm,bottom=2cm,left=3cm,right=3cm,marginparwidth=1.75cm]{geometry}

\title{Approximate factorization properties for operator systems}

\author{Roy Araiza}
\address{Department of Mathematics, University of Illinois Urbana-Champaign, Urbana, IL, 61801}
\email{raraiza@illinois.edu}

\author{Larissa Kroell}
\address{Department of Mathematics, University of Waterloo, Waterloo, ON, N2L 3G1}
\email{lkroell@uwaterloo.ca}

\author{Travis Russell}
\address{Department of Mathematics, Texas Christian University, Fort Worth, TX, 76109}
\email{travis.b.russell@tcu.edu}

\author{Thomas Sinclair}
\address{Department of Mathematics, Purdue University, West Lafayette, IN, 47907}
\email{tsincla@purdue.edu}

\date{}

\usepackage{natbib}
\usepackage{graphicx}

\begin{document}

\maketitle

\begin{abstract}
    We show that many of the standard nuclearity properties considered in the literature for the hierarchy of operator system tensor products can be expressed as approximate factorization properties, generalizing the well-known Completely Positive Approximation Property for nuclear C*-algebras due to Choi and Effros and its generalization to nuclear operator systems due to Han and Paulsen.
\end{abstract}

\section{Introduction}

A C*-algebra $\cA$ is \textit{nuclear} if for every other C*-algebra $\cB$ the algebraic tensor product $\cA \otimes \cB$ possesses a unique C*-norm. A foundational result in the theory C*-algebras due to Choi and Effros \cite{CEapprox} (and independently to Kirchberg \cite{KiCPAP}) says that a C*-algebra $\mathcal{A}$ is nuclear if and only if it has the \textit{competely positive approximation property}. In the unital case, this means that there exists a net of integers $\{n_{\lambda}\}$ and unital completely positive (ucp) maps $\varphi_{\lambda}: \cA \to M_{n_{\lambda}}$ and $\psi_{\lambda}: M_{n_{\lambda}} \to \cA$ such that $\psi_{\lambda} \circ \varphi_{\lambda}(x) \to x$ in norm for every $x \in \cA$.

\begin{figure}[h!]
    \centering
        \begin{tikzcd}
        & M_{n_{\lambda}} \arrow[rd, "\psi_{\lambda}" black] \\
        \mathcal{A} \arrow[rr, "id"] \arrow[ru, "\varphi_{\lambda}"]  & & \mathcal{A}
        \end{tikzcd}
    \caption{Completely positive approximation property (CPAP)}
    \end{figure}

Through the work of many authors the tensor theory of \textit{operator systems} has been developed and rigorously studied. Operator systems are unital self-adjoint subspaces of C*-algebras abstractly characterized as matrix-ordered $*$-vector spaces possessing an Archimedean matrix order unit. Given two operator system $\cS$ and $\cT$, an \textit{operator system tensor product} is a matrix ordering on the algebraic tensor product $\cS \otimes \cT$ making it into an operator system with Archimedean matrix order unit $I_{\cS} \otimes I_{\cT}$. A variety of tensor product structures for operator systems have been developed, elucidating deep connections between those tensor products and important properties of operator systems and C*-algebras such as the \textit{exactness}, the \textit{local lifting property}, and the \textit{weak expectation property}, to name a few. These connections led to operator system reformulations of many important problems in C*-algebra theory, for instance Kirchberg's conjecture and the Smith-Ward problem \cite{Kavruk}.

An operator system $\cS$ is called \textit{nuclear} if for every operator system $\cT$, $\cS \otimes \cT$ possesses a unique matrix ordering making $\cS \otimes \cT$ into an operator system with its natural order unit. The completely positive approximation property was generalized to the context of \textit{operator systems} by Han and Paulsen. Specifically, they showed that an operator system $\cS$ is nuclear if and only if it possesses the \textit{operator system completely positive approximation property}. This means that there exists a net of integers $\{n_{\lambda}\}$ and ucp maps $\varphi_{\lambda}: \cS \to M_{n_{\lambda}}$ and $\psi_{\lambda}: M_{n_{\lambda}} \to \cS$ such that $\psi_{\lambda} \circ \varphi_{\lambda}(x) \to x$ in norm for every $x \in \cS$. Han and Paulsen provided an original proof of this property which applies both to operator systems and to unital C*-algebras.

In this paper, we show that a broad class of the operator system nuclearity properties studied in the literature is equivalent to natural ``triangular'' approximate factorization properties. Specifically, we formulate approximate factorization properties equivalent to several central nuclearity properties, including (min,c), (min, el), (min,er), (el,c), (el,max), and (c,max)-nuclearity. We also consider the notion of \textit{self-nuclearity}. We say that an operator system is $(\sigma, \tau)$-self-nuclear if $\cS \otimes_{\sigma} \cS = \cS \otimes_{\tau} \cS$. We prove that, for finite-dimensional operator systems, (min,c) and (min,el)-self-nuclearity are equivalent to approximate factorization properties relating $\cS$ and its operator system dual $\cS^d$. We summarize some of these approximate factorization properties in Table \ref{tab: Fac Props} above.

\begin{table}[h!]
    \centering
    \begin{tabularx}{\textwidth}{|X|c|c|}
        \hline \textbf{Approximation Property} & \textbf{Nuclearity} & \textbf{Diagram} \\ \hline
        CPAP (Han-Paulsen): Point-norm approximation of the identity map through matrix algebras. & (min,max) &  \begin{tikzcd}[ampersand replacement=\&]
        \& M_{n_{\lambda}} \arrow[rd, "\psi_{\lambda}" black] \\
        \mathcal{S} \arrow[rr, "id"] \arrow[ru, "\varphi_{\lambda}"]  \& \& \mathcal{S}
        \end{tikzcd}
        \\ \hline
        Injective envelope AP: Point-norm approximation of the inclusion $\cS \to I(\cS)$ via matrix algbras. & (min,el) & \begin{tikzcd}[ampersand replacement=\&]
        \& M_{n_{\lambda}} \arrow[rd, "\psi_{\lambda}" black] \\
        \mathcal{S} \arrow[rr, "i"] \arrow[ru, "\varphi_{\lambda}"]  \& \& I(\mathcal{S})
        \end{tikzcd} \\ \hline
        Minimal AP: Point-norm approximation of the inclusion $E \to \cS$ of every finite-dimensional subsystem $E$ of $\cS$ via $k$-minimal operator systems $G_{\lambda}$. & (min,el) & \begin{tikzcd}[ampersand replacement=\&]
        \& G_{\lambda} \arrow[rd, "\psi_{\lambda}" black] \\
        E \arrow[rr, "i"] \arrow[ru, "\varphi_{\lambda}"]  \& \& \mathcal{S}
        \end{tikzcd}\\ \hline
        Maximal AP: Point-norm approximation of the inclusion $E \to \cS$ of every finite-dimensional subsystem $E$ of $\cS$ via $k$-maximal operator systems $F_{\lambda}$. & (min,er) & 
        \begin{tikzcd}[ampersand replacement=\&]
        \& F_{\lambda} \arrow[rd, "\psi_{\lambda}" black] \\
        E \arrow[rr, "i"] \arrow[ru, "\varphi_{\lambda}"]  \& \& \mathcal{S}
        \end{tikzcd} \\ \hline
        Universal cover AP: Point-norm approximation of the inclusion $\cS \to C^*_u(\cS)$ via matrix algebras. & (min,c) & \begin{tikzcd}[ampersand replacement=\&]
        \& M_{n_{\lambda}} \arrow[rd, "\psi_{\lambda}" black] \\
        \mathcal{S} \arrow[rr, "i"] \arrow[ru, "\varphi_{\lambda}"]  \& \& C^*_u(\mathcal{S})
        \end{tikzcd} \\
        \hline
        Double-commutant AP: Point-weak approximation of the inclusion $\cS \to C^*_u(\c)$ via matrix algebras $I_{\lambda}$. & (min,c) & \begin{tikzcd}[ampersand replacement=\&]
        \& M_{n_{\lambda}} \arrow[rd, "\psi_{\lambda}" black] \\
        \mathcal{S} \arrow[rr, "i"] \arrow[ru, "\varphi_{\lambda}"]  \& \& \mathcal{S}''
        \end{tikzcd} \\
        \hline
        Injective double-commutant AP: Point-weak approximation of the inclusion $\cS \to \cS''$ via injective operator systems $I_{\lambda}$. & (el,c) & \begin{tikzcd}[ampersand replacement=\&]
        \& I_{\lambda} \arrow[rd, "\psi_{\lambda}" black] \\
        \mathcal{S} \arrow[rr, "i"] \arrow[ru, "\varphi_{\lambda}"]  \& \& \mathcal{S}''
        \end{tikzcd} \\ \hline
        Injective bidual AP: Point-weak approximation of the inclusion $\cS \to \cS^{**}$ via injective operator systems $I_{\lambda}$. & (el,max) & \begin{tikzcd}[ampersand replacement=\&]
        \& I_{\lambda} \arrow[rd, "\psi_{\lambda}" black] \\
        \mathcal{S} \arrow[rr, "i"] \arrow[ru, "\varphi_{\lambda}"]  \& \& \mathcal{S}^{**}
        \end{tikzcd} \\ \hline
        C*-Bidual AP: Point-weak approximation of the inclusion $\cS \to \cS^{**}$ via C*-algebras $\cA_{\lambda}$. & (c,max) & \begin{tikzcd}[ampersand replacement=\&]
        \& \cA_{\lambda} \arrow[rd, "\psi_{\lambda}" black] \\
        \mathcal{S} \arrow[rr, "i"] \arrow[ru, "\varphi_{\lambda}"]  \& \& \mathcal{S}^{**}
        \end{tikzcd} \\ \hline
        Approximation of cp maps $u: \cS^d \to M_n(\cS)$ via matrix algebras into $C^*_u(\cS)$. & Self-(min,c) & \begin{tikzcd}[ampersand replacement=\&]
        \& \cA_{\lambda} \arrow[rd, "\psi_{\lambda}" black] \\
        \mathcal{S}^d \arrow[rr, "u"] \arrow[ru, "\varphi_{\lambda}"]  \& \& M_n(C^*_u(\mathcal{S}))
        \end{tikzcd} \\ \hline
        Approximation of cp maps $u: \cS^d \to M_n(\cS)$ via matrix algebras into $I(\cS)$. & Self-(min,el) & \begin{tikzcd}[ampersand replacement=\&]
        \& \cA_{\lambda} \arrow[rd, "\psi_{\lambda}" black] \\
        \mathcal{S}^d \arrow[rr, "u"] \arrow[ru, "\varphi_{\lambda}"]  \& \& M_n(I(\mathcal{S}))
        \end{tikzcd} \\ \hline
    \end{tabularx}
    \caption{Approximate factorization properties for various operator system nuclearities.}
    \label{tab: Fac Props}
\end{table}

Several of these approximate factorization properties are natural from the perspective of the tensor theory of operator systems. For example, if $\dim(\cS) < \infty$, then our minimal and maximal completely positive approximate factorization properties are analogous to known results concerning finite-dimensional matrix convex sets \cite{PasserPaulsen}. However, we are not aware of these formulations in the literature in this generality. For instance, our maximal CPAP for infinite-dimensional operator systems yields a characterization of the operator system local lifting property in terms of approximate factorizations through k-maximal operator systems. Some of the approximate factorization properties follow via combining the Han-Paulsen approximation theorem with known nuclearity characterizations due to Kavruk and collaborators. Others requiere additional arguments. For example, our proof for the minimal CPAP relies on our Lemma \ref{lem: inverse approx cp} which allows us to perturb the range of a ucp map on a finite-dimensional operator system, while our proof of the maximal CPAP relies on the equivalence of the local lifting property and CP-stability due to Goldbring and the last author \cite{GoldbringSinclair}. Using the CPAP to universal, we are able to recover a result from \cite{KPTT1} that a graph operator system is (min,c)-nuclear if and only if an associated matrix has a positive completion with entries in the universal C*-cover. We also show, using work of Le Merdy on semidiscreteness \cite{LeMer}, that the injective operator systems $I_{\lambda}$ in the injective double commutant approximation property (see Table \ref{tab: Fac Props}) can be taken to be finite-dimensional if and only if the operator system is (min,c)-nuclear. Thus, for a non-exact (el,c)-nuclear operator system, the injective systems appearing in this approximation property cannot, in general, be replaced by matrix algebras. It also gives a point-weak approximate factorization formulation of (min,c)-nuclearity that is not an immediate consequence of the standard approximation formulations. Our paper concludes with approximation properties characterizing ($\sigma$,$\tau$)-\textit{self-nuclearity}, i.e. the situation when $\cS \otimes_{\sigma} \cS = \cS \otimes_{\tau} \cS$. This kind of nuclearity was studied by Kavruk \cite{Kavruk}, who showed, that the equaltiy $\mathcal S_2 \otimes_{\min} \mathcal S_2 =  S_2 \otimes_{c} \mathcal S_2$ for the five-dimensional operator system generated by the two free unitaries is equivalent to Kirchberg's conjecture, equivalently the Connes' embedding problem.

Our paper is organized in the following way. In Section \ref{sec: prelim}, we describe preliminary results concerning operator system tensor products and known equivalences for their nuclearity properties. In Section \ref{sec: Fac Props}, we introduce our approximate factorization properties and prove the various equivalences to operator system nuclearity. Section \ref{sec: Fac Props} is divided into subsections, each focusing on closely related approximate factorization properties.

\section{Preliminaries} \label{sec: prelim}

We assume basic familiarity with operator systems and completely positive maps, and refer the reader to \cite{Pnbook} for more background. For completeness, we review many definitions and results from the literature relevant to this paper.

\subsection{C*-covers}

Given an operator system $\mathcal{S}$, a C*-cover for $\mathcal{S}$ consists of a C*-algebra $\mathcal{A}$ and a unital complete order embedding $j: \mathcal{S} \to \mathcal{A}$ such that $C^*(j(\mathcal{S})) = \mathcal{A}$. Usually the embedding $j$ is clear from context and not mentioned when describing a C*-cover.

Among all C*-covers, two will be especially important: the \textit{universal C*-cover} $C^*_u(\cS)$ and the \textit{C*-envelope} $C^*_e(\cS)$. We will also be interested in the \textit{injective envelope}, which is the smallest injective operator system $I(\cS)$ containing a unital completely isometric copy of $\cS$. It is known that $I(\cS)$ with the Choi--Effros product becomes a C*-algebra where the C*-subalgebra generated by $\cS$ in $I(\cS)$ coincides with $C^*_e(\cS)$.

The universal C*-cover is characterized by the following universal property: whenever $\varphi: \cS \to B(H)$ is ucp, there exists a unique $*$-homomorphism $\pi_{\varphi}: C^*_u(\cS) \to B(H)$ extending $\varphi$. The C*-algebra $C^*_u(\cS)$ was first studied by Kirchberg and Wasserman in \cite{KirWas}. It may be identified with the completion of a certain Fock algebra over $\cS$ (e.g. see \cite{KPTT1}) or as the C*-algebra generated by the image of $\cS$ under the direct sum of ucp maps from $\cS$ into matrix algebras (see \cite{KirWas}). This characterization shows, as in \cite{KirWas}, that $C_u^*(\mathcal S)$ is residually finite-dimensional. Moreover, its universal property guarantees that every C*-cover of $\cS$ is a quotient of $C^*_u(\cS)$.

\begin{figure}[h!]
\centering
\begin{tikzcd}
C^*_u(\cS) \arrow[rd, "\pi_{\varphi}"] \\
\mathcal \mathcal{S} \arrow[u, hook] \arrow[r, "\varphi" black] & B(H)
\end{tikzcd}
\caption{The universal property of $C^*_u(\cS)$.}
\end{figure}

The C*-envelope of an operator system is characterized by a co-universal property: whenever $\cA$ is a C*-cover of $\cS$, there exists a unital $*$-homomorphism $\pi: \cA \to C^*_e(\cS)$ fixing the copy of $\cS$ in both C*-covers (i.e. extending the embedding of $\cS$ into $C^*_e(\cS)$). This co-universal property merely states that $C^*_e(\cS)$ is the unique quotient of every C*-cover for $\cS$. The existence of the C*-envelope was conjectured by Arveson in \cite{Arv}, and later verified by Hamana \cite{Hama} in through the existence of the injective envelope. The original approach suggested by Arveson was verified much later by means of a deeper results (see \cite{DM}, \cite{ArvChoquet}, \cite{DK}).

\begin{figure}[h!]
\centering
\begin{tikzcd}
\cA \arrow[rd, "\pi"] \\
\mathcal \mathcal{S} \arrow[u, hook] \arrow[r, "i" black, hook] & C^*_e(\cS)
\end{tikzcd}
\caption{The universal property of $C^*_e(\cS)$.}
\end{figure}

An operator system $\mathcal{I}$ is \textit{injective} if whenever $\cS \subseteq \cT$ is a complete order embedding and $\varphi: \cS \to \mathcal{I}$ is ucp, then there exists a ucp extension $\varphi': \cT \to \mathcal{I}$. The \textit{injective envelope} $I(\cS)$ is the minimal injective operator system contain $\cS$ completely isometrically. Specifically, whenever $\cR$ is an injective operator system and $j: \cS \to \cR$ is a unital complete order embedding, there exists an injective subsystem $\mathcal{I}$ such that $j(\cS) \subseteq \mathcal{I} \subseteq \cR$ and such that for injective operator system $j(\cS) \subseteq \mathcal{J} \subseteq \mathcal{I}$ we must have $\mathcal{J} = \mathcal{I}$. The injective envelope is unique in the sense that that if $\mathcal{I}'$ is another manifestation of the injective envelope as above, possibly with a different map $j' : \cS \rightarrow \cR'$ for an injective operator system $\cR'$, then $\mathcal{I} \cong \mathcal{I}'$ via a unital complete isometry fixing both copies of $\cS$. Therefore, we shall denote the injective envelope of $\cS$ by $I(\cS)$. The existence and uniqueness of $I(\cS)$ was shown by Hamana, who also showed that with the Choi--Effros product $I(\cS)$ is completely isometric to a C*-algebra and that the C*-subalgebra of $I(\cS)$ generated by $\cS$ is the C*-envelope $C^*_e(\cS)$. 

\subsection{Tensor products}

Let $\mathcal{S}$ and $\mathcal{T}$ be operator systems with units $e$ and $f$, respectively. A tensor product structure $\mathcal{S} \otimes \mathcal{T}$ is a matrix ordering $\{C_n\}$ on $\mathcal{S} \otimes \mathcal{T}$ such that
\begin{enumerate}
    \item $(\mathcal{S} \otimes \mathcal{T}, \{C_n\}, e \otimes f)$ is an operator system,
    \item $P \otimes Q \in C_{pq}$ for every $P \in M_p(\mathcal{S})^+$ and $Q \in M_q(\mathcal{T})^+$, and
    \item $\varphi \otimes \psi: \mathcal{S} \otimes \mathcal{T} \to M_{pq}$ is completely positive for every ucp $\varphi: \mathcal{S} \to M_p(\mathbb{C})$ and $\psi: \mathcal{T} \to M_q$.
\end{enumerate}

An \textit{operator system tensor product} $\otimes_{\tau}$ is a mapping that assigns to any pair of operator systems $\cS$ and $\cT$ a corresponding operator system tensor product structure on $\cS \otimes \cT$, denoted $\cS \otimes_{\tau} \cT$. If this mapping is a functor on the category of operator systems (with ucp maps as morphisms), then the tensor product is called \textit{functorial}. Specifically, we say that $\otimes_{\tau}$ is functorial if for any ucp maps $\varphi: \cS \to \cS'$ and $\psi: \cT \to \cT'$, it follows that $\varphi \otimes \psi: \cS \otimes_{\tau} \cT \to \cS' \otimes_{\tau} \cT'$ is ucp. A tensor product is \textit{left injective} (resp. \textit{right injective}) if $\varphi \otimes id: \cS \otimes_{\tau} \cT \to \cS' \otimes \cT$ (resp. $id \otimes \varphi: \cS \otimes_{\tau} \cT \to \cS \otimes \cT'$) is a unital complete order embedding whenever $\varphi$ (resp. $\psi$) is a unital complete order embedding. If $\otimes_{\tau}$ is both left and right injective, it is called \textit{injective}. Similarly, we may define \textit{left projective}, \textit{right projective}, and \textit{projective} tensor products if the tensor product preserves complete quotient maps on the left factor, right factor, or both factors. A tensor product is \textit{associative} if $\cS \otimes_{\tau} (\cT \otimes_{\tau} \cR) \cong (\cS \otimes_{\tau} \cT) \otimes_{\tau} \cR$ for all operator systems $\cS$, $\cT$, and $\cR$. It is \textit{symmetric} if the map $x \otimes y \mapsto y \otimes x$ from $\cS \otimes_{\tau} \cT$ to $\cT \otimes_{\tau} \cS$ is a complete order isomorphism.

The following tensor products were all defined in \cite{KPTT1}. In the following descriptions, fix operator system $\cS$ and $\cT$ with units $e$ and $f$, respectively.

\begin{itemize}
    \item \textbf{The minimal tensor product}: We say that $x \in M_n(\cS \otimes_{min} \cT)^+$ provided that $(\varphi \otimes \psi) ^{(n)}(x) \geq 0$ for every $\varphi: \cS \to M_p$ and $\psi: \cT \to M_q$ for all $p,q \in \mathbb{N}$. The minimal tensor product is known to be functorial, injective, associative, and symmetric. It is also completely isometrically isomorphic with the operator space minimal tensor product of $\cS$ and $\cT$ as operator spaces (see e.g. Corollary 4.9 of \cite{KPTT1}). Finally, injectivity of the minimal tensor product is often described concretely as follows: whenever $\pi:\cS \subseteq B(H)$ and $\rho:\cT \subseteq B(K)$ are unital complete order embeddings, then $\pi \otimes \rho: \cS \otimes_{min} \cT \subseteq B(H \otimes K)$ is a unital complete order embedding.

    \item \textbf{The maximal tensor product}: We say that $x \in M_n(\cS \otimes_{max} \cT)^+$ provided that for every $\epsilon > 0$ there exist $P \in M_p(\cS)^+$ and $Q \in M_q(\cT)^+$ and a scalar matrix $\alpha$ such that $x + \epsilon I_n \otimes e \otimes f = \alpha^* (P \otimes Q) \alpha$. Equivalently, the operator system structure on $\cS \otimes_{max} \cT$ is the largest (i.e. smallest matrix ordering) making $\cS \otimes_{max} \cT$ an operator system and including all elementary tensors $P \otimes Q$ of positive matrices within the positive cone. The maximal tensor product is known to be functorial, projective, associative, and symmetric. Moreover, the maximal tensor product coincides with the maximal tensor product of C*-algebras whenever both factors are C*-algebras, i.e. $\cA \otimes_{max} \cB = \cA \otimes_{C^*-max} \cB$ for all C*-algebras $\cA$ and $\cB$.

    \item \textbf{The commuting tensor product}: Suppose that $\varphi: \cS \to B(H)$ and $\psi: \cT \to B(H)$ are ucp. We define $\varphi \cdot \psi: \cS \otimes \cT \to B(H)$ by setting $\varphi \cdot \psi (s \otimes t) = \varphi(s) \psi(t)$ for all $s \in \cS$ and $t \in \cT$ and then extending by linearity to $\cS \otimes \cT$. We say that $x \in M_n(\cS \otimes_c \cT)$ if $(\varphi \cdot \psi)^{(n)}(x) \geq 0$ for all pairs of maps $\varphi: \cS \to B(H)$ and $\psi: \cT \to B(H)$ with commuting range. The commuting tensor product is known to be functorial and symmetric, but not left or right injective. It is an open problem whether or not the commuting tensor product is associative. Furthermore, whenever $\cA$ is a C*-algebra, we have $\cS \otimes_c \cA = \cS \otimes_{max} \cA$. Finally, we have the inclusions $\cS \otimes_c \cT \subseteq C^*_u(\cS) \otimes_{max} \cT$ and $\cS \otimes_c \cT \subseteq \cS \otimes_{max} C^*_u(\cT)$ (see \cite{KPTT2}).

    \item \textbf{The injective tensor products}: We define the \text{left injective} tensor product $\otimes_{el}$ by the following inclusion: $\cS \otimes_{el} \cT \subseteq I(\cS) \otimes_{max} \cT$, i.e. an element is positive in $M_n(\cS \otimes_{el} \cT)^+$ if it is positive as an element of $M_n(I(\cS) \otimes_{max} \cT)^+$. Similarly, we define the \text{right injective} tensor product $\otimes_{er}$ by the inclusion $\cS \otimes_{er} \cT \subseteq \cS \otimes_{max} I(\cT)$ and the \textit{injective} tensor product $\otimes_e$ by the inclusion $\cS \otimes_e \cT \subseteq I(\cS) \otimes_{max} I(\cT)$. All of the injective tensor products are known to be functorial. The left (resp. right) injective tensor product is left (resp. right) injective, and the injective tensor product is injective. The left injective and right injective tensor products are not symmetric, although the injective tensor product is symmetric. We also have $\cS \otimes_{el} \cT \cong \cT \otimes_{er} \cS$.
\end{itemize}

A partial order can be imposed on the set of tensor products as follows: we say $\tau \preceq \sigma$ if the identity map from $\cS \otimes_{\sigma} \cT \to \cS \otimes_{\tau} \cT$ is ucp. For the above defined tensor products, we have $min \preceq e \preceq el, er \preceq c \preceq max$ (with $el$ and $er$ not comparable). Moreover, the following universal properties hold: if $\tau$ is any tensor product, then $min \preceq \tau \preceq max$. Moreover, if $\tau$ is left (resp. right) injective, then $\tau \preceq el$ (resp. $\tau \preceq er$), and $\tau$ is injective then $\tau \preceq e$.

\subsection{$k$-minimality and $k$-maximality}

A useful fact about operator systems is the following: an element $x \in M_k(\cS)$ is positive if and only if $\varphi^{(k)}(x) \geq 0$ for all ucp $\varphi: \cS \to M_k$. An operator system $\cS$ is \textit{$k$-minimal} if whenever $\varphi^{(n)}(x) \geq 0$ for every ucp $\varphi: \cS \to M_k$, it follows that $x \in M_n(\cS)^+$. The following theorem collects standard characterizations of k-minimal operator systems, due largely to Xhabli \cite{Xhabli}, together with the formulation in item (4) from \cite{ART23}.

\begin{theorem}
    Let $\cS$ be an operator system. The following statements are equivalent.
    \begin{enumerate}
        \item $\cS$ is $k$-minimal.
        \item There exists a compact Hausdorf space $X$ and a unital complete order embedding $j: \cS \to C(X,M_k)$.
        \item For every operator system $\cT$ and $k$-positive map $\varphi: \cT \to \cS$, $\varphi$ is completely positive.
        \item Whenever $x \in M_n(\cS)$ and $\alpha^* x \alpha \in M_k(\cS)^+$ for all $\alpha \in M_{n,k}$, it follows that $x \geq 0$ (see \cite{ART23}).
    \end{enumerate}
\end{theorem}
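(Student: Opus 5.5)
I will prove the equivalences in the cycle $(1)\Rightarrow(2)\Rightarrow(3)\Rightarrow(1)$, and then $(1)\Leftrightarrow(4)$ separately.

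\emph{Step $(1)\Rightarrow(2)$.} Let $X$ be the set of ucp maps $\cS\to M_k$. With the point-weak* topology it is a compact Hausdorff space. Define $j(s)(\varphi)=\varphi(s)$. Then $j$ is unital, and $j(s)$ is continuous in $\varphi$, so $j$ maps $\cS$ into $C(X,M_k)$. Since each evaluation $j\mapsto j(\varphi)$ is a $*$-homomorphism on $C(X,M_k)$, $j$ is completely positive. Now suppose $x\in M_n(\cS)$ and $j^{(n)}(x)\ge 0$. Then $\varphi^{(n)}(x)\ge0$ for every $\varphi\in X$, and $k$-minimality gives $x\ge0$. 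Hence $j$ is a complete order embedding.

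\emph{Step $(2)\Rightarrow(3)$.} Let $\varphi:\cT\to\cS$ be $k$-positive. It suffices to show that $j\circ\varphi$ is completely positive. Since positivity in $M_n(C(X,M_k))$ is checked pointwise, it is enough to show that $\mathrm{ev}_t\circ j\circ\varphi:\cT\to M_k$ is completely positive for each $t\in X$. That map is $k$-positive because $\mathrm{ev}_t\circ j$ is completely positive. By Choi's theorem, a $k$-positive map into $M_k$ is completely positive. (Alternatively, the Choi-matrix argument: positivity of $\psi^{(n)}(x)$ is tested by $\langle\psi^{(n)}(x)\xi,\xi\rangle$ with $\xi\in\mathbb C^n\otimes\mathbb C^k$. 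Such a $\xi$ compresses through a rank-$\le k$ scalar matrix $\alpha\in M_{n,k}$, which reduces the test to $\psi^{(k)}(\alpha^*x\alpha)\ge0$.)

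\emph{Step $(3)\Rightarrow(1)$.} I first build a $k$-minimal structure on the same space: let $\cS_k$ be $\cS$ with cones
\[
M_n(\cS_k)^+=\{x:\varphi^{(n)}(x)\ge0\ \text{for all ucp }\varphi:\cS\to M_k\}.
\]
This is an operator system: the Archimedean property and the matrix order unit are inherited, and the cones are proper because the ucp maps into $M_k$ separate points of $\cS$. The identity $\cS\to\cS_k$ is completely positive. The identity $\cS_k\to\cS$ is $k$-positive, because for levels $n\le k$ positivity in $M_n(\cS)$ is detected by ucp maps into $M_n$, and these can be dilated or compressed to maps into $M_k$. By (3) the identity $\cS_k\to\cS$ is completely positive, so the cones coincide and $\cS$ is $k$-minimal. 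The main technical care is in this step: verifying that $\cS_k$ is a genuine operator system and that the identity is $k$-positive.

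\emph{Step $(1)\Leftrightarrow(4)$.} Suppose (1) holds and $\alpha^*x\alpha\ge0$ for all $\alpha\in M_{n,k}$. For any ucp $\varphi:\cS\to M_k$ we have $\alpha^*\varphi^{(n)}(x)\alpha=\varphi^{(k)}(\alpha^*x\alpha)\ge0$. Compressions by all $\alpha\in M_{n,k}$ detect positivity of an $nk\times nk$ matrix via vectors of Schmidt rank at most $k$, which here is all vectors. So $\varphi^{(n)}(x)\ge0$, and (1) gives $x\ge0$. Conversely, assume (4) and suppose $\varphi^{(n)}(x)\ge0$ for all ucp $\varphi:\cS\to M_k$. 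Then $\varphi^{(k)}(\alpha^*x\alpha)\ge0$ for every such $\varphi$. Since ucp maps into $M_k$ determine the cone $M_k(\cS)^+$, it follows that $\alpha^*x\alpha\ge0$ for every $\alpha$, and (4) gives $x\ge0$.
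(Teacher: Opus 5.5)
Your argument is correct. The paper gives no proof of this theorem; it quotes it from Xhabli \cite{Xhabli} and \cite{ART23}. Your route is the standard argument behind those citations:
\begin{itemize}
\item $X$ is the compact space of ucp maps $\cS\to M_k$.
\item (2)$\Rightarrow$(3) is Choi's theorem that $k$-positive maps into $M_k$ are completely positive. This holds for operator-system domains, and your Schmidt-rank compression argument proves it directly.
\item (3)$\Rightarrow$(1) goes through the auxiliary structure $\cS_k$, which is Xhabli's $k$-minimal structure $\cS^{k\text{-min}}$.
\item (1)$\Leftrightarrow$(4) rests on the fact that every vector in $\mathbb{C}^n\otimes\mathbb{C}^k$ has Schmidt rank at most $k$.
\end{itemize}
One small wording point in (1)$\Rightarrow$(2): $j$ is completely positive because $\mathrm{ev}_\varphi\circ j=\varphi$ is completely positive for each $\varphi\in X$ and positivity in $M_n(C(X,M_k))$ is checked pointwise. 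The fact that the evaluations are $*$-homomorphisms is not the reason, and your sentence leaves the actual reason implicit.
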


From (2), it is evident that $M_n$ is $k$-minimal as an operator system whenever $n \leq k$. The same holds for any $l^{\infty}$ direct sum of matrix algebras $\oplus_{\lambda} M_{n_{\lambda}}$ with $n_{\lambda} \leq k$ for all $k$, or any subsystem of such a matrix algebra. The following was shown in \cite{ART23}:

\begin{theorem}
    Let $\cS$ be a $k$-minimal operator system. If $\varphi: C^*_e(\cS) \to B(H)$ is irreducible, then $\dim(H) \leq k$.
\end{theorem}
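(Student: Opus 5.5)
By the equivalence (1)$\Leftrightarrow$(2) of the preceding theorem, I will fix a unital complete order embedding $j:\cS \to C(X,M_k)$ for some compact Hausdorff space $X$. The plan is to obtain from $j$ a C*-cover of $\cS$ in which every irreducible representation has dimension at most $k$. The co-universal property of $C^*_e(\cS)$ then passes this bound down to $C^*_e(\cS)$.

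Let $\cB = C^*(j(\cS)) \subseteq C(X,M_k)$. Then $(\cB, j)$ is a C*-cover of $\cS$. By the co-universal property of the C*-envelope, there is a surjective unital $*$-homomorphism $\pi:\cB \to C^*_e(\cS)$ that restricts to the identity on the copy of $\cS$. It is surjective because its range is a C*-algebra containing the generating set $\cS$.

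Now take an irreducible representation $\varphi: C^*_e(\cS)\to B(H)$ (I read "irreducible" as an irreducible $*$-representation). The composition $\varphi\circ\pi:\cB\to B(H)$ is again an irreducible representation, since $\pi$ is surjective and so $\varphi\circ\pi(\cB)=\varphi(C^*_e(\cS))$ has trivial commutant. It therefore suffices to show that every irreducible representation of a C*-subalgebra $\cB\subseteq C(X,M_k)$ acts on a space of dimension at most $k$.

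To prove this, I will use that $C(X,M_k)\cong C(X)\otimes M_k$ is $k$-subhomogeneous, i.e. it satisfies the standard polynomial identity $s_{2k}$ (Amitsur--Levitzki), evaluated pointwise. The subalgebra $\cB$ inherits this identity. An irreducible representation $\rho$ of $\cB$ has strongly dense image in $B(H)$ by Kaplansky's density theorem, so $B(H)$ also satisfies $s_{2k}$. But $M_m$ fails $s_{2k}$ when $m>k$, and $B(H)$ contains a copy of $M_m$ whenever $\dim H\geq m$. Hence $\dim H\le k$.

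Alternatively, I could avoid polynomial identities. Every irreducible representation of $\cB$ extends, after enlarging the space, to a representation of $C(X,M_k)$ (extend the pure state and apply GNS). The irreducible representations of $C(X,M_k)$ are point evaluations on $\mathbb{C}^k$, and $\rho$ is a compression of a subrepresentation of a direct sum of these. Making this route rigorous takes more care, so I prefer the polynomial-identity argument.

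The main obstacle is this last step, bounding irreducible representations of a subalgebra of $C(X,M_k)$. Passing density through Kaplansky to conclude that the polynomial identity holds on all of $B(H)$ needs care. Multilinear identities are preserved under strong limits by joint continuity on bounded sets, so the identity does pass to the strong closure.
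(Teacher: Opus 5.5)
Your proof is correct. The paper gives no argument for this statement; it is quoted from \cite{ART23}. So what you have written is a self-contained proof, not a variant of one in the text.

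Your reduction works as follows: realize $\cS\subseteq C(X,M_k)$, set $\cB=C^*(j(\cS))$, and use the co-universal property to write $C^*_e(\cS)$ as a quotient of $\cB$. This reduces everything to the standard fact that C*-subalgebras of $C(X,M_k)$ are $k$-subhomogeneous. Your polynomial-identity proof of that fact is sound:
\begin{itemize}
\item $s_{2k}$ holds pointwise in $C(X,M_k)$ by Amitsur--Levitzki;
\item Kaplansky density, together with joint strong continuity of multiplication on bounded sets, transfers it to $B(H)$;
\item the staircase $e_{11},e_{12},e_{22},\dots,e_{kk},e_{k,k+1}$ shows that $M_m$ violates $s_{2k}$ when $m>k$.
\end{itemize}

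The alternative you set aside is actually shorter and easy to make rigorous. Since $\cB$ contains the unit of $C(X,M_k)$, every pure state of $\cB$ extends to a pure state of $C(X,M_k)$. Hence an irreducible representation of $\cB$ is unitarily equivalent to the restriction of an irreducible representation $\sigma$ of $C(X,M_k)$ to the invariant subspace $\overline{\sigma(\cB)\xi}$, where $\xi$ is the GNS vector of the extended state. Every such $\sigma$ is a point evaluation acting on $\mathbb{C}^k$, which gives the bound immediately with no PI theory or density theorem.

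Finally, passing to $C^*_e(\cS)$ as a quotient of the concrete cover $C^*(j(\cS))$ is essential, not just convenient. The conclusion fails for arbitrary C*-covers. For example, $\ell^\infty_3$ is $1$-minimal, yet $C^*_u(\ell^\infty_3)$ maps onto $M_n$ for every $n$. To see this, send the minimal projections to $A/2$, $B/2$, $I-(A+B)/2$ for positive contractions $A,B$ generating $M_n$. So your argument really proves the slightly stronger fact that every C*-cover of $\cS$ which is a quotient of $C^*(j(\cS))$ is $k$-subhomogeneous.
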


Since any C*-algebra may be represented faithfully into the $l^{\infty}$-direct sum of images of its irreducible representations, the C*-envelope embeds isometrically into an $l^{\infty}$ direct sum of matrix algebras of size no more than $k \times k$. As such a direct sum is injective, we conclude that the injective envelope $I(\cS)$ is $k$-minimal. In fact, $I(\cS)$ is itself isometric to an $l^{\infty}$ direct sum of matrix algebras no larger than $k \times k$. See also \cite{Kavruk} for more discussion on $k$-minimal operator systems. 

In duality with $k$-minimal operator systems, we have the family of \textit{$k$-maximal} operator systems. A $k$-minimal operator system is an operator system $\cS$ for which the matrix cone $\{M_n(\cS)^+\}$ is the smallest matrix cone generated by $M_k(\cS)^+$. More explicitly, $M_n(\cS^{k-max})^+$ is the archimedean closure of the cone of elements of the form $\alpha^* \text{diag}(P_1, P_2, \dots, P_m) \alpha$ where $P_1, \dots, P_m \in M_k(\cS)^+$ and $\alpha \in M_{mk,n}$. The $k$-maximal operator systems are also characterized the by following property: an operator system $\cS$ is $k$-maximal if and only if every unital $k$-positive map $\varphi: \cS \to \cT$ from $\cS$ to an operator system $\cT$ is ucp \cite{Xhabli}. 

\subsection{Duality}

For any operator system $\cS$, the space $\cS^d$ of bounded linear maps from $\cS$ to $\mathbb{C}$ can be naturally viewed as a matrix ordered $*$-vector space. The adjoint of a linear functional $\varphi: \cS \to \mathbb{C}$ is defined by $\varphi^*(x) := \varphi(x^*)^*$, and the positive cone $C_n^d$ of $M_n(\cS^d)$ is identified with the set of completely positive maps $\varphi: \cS \to M_n$. In the case when $\cS$ is finite-dimensional, Choi and Effros show that the positive cone of $\cS^d$ always has a (non-unique) interior point $\varphi_0$ making $(\cS^d, \{C_n^d\}, \varphi_0)$ into an abstract operator system. Although the positive cone is unique, the induced matrix norms depend on the choice of order unit, making this operator system structure non-unique.

Given finite-dimensional operator system $\cS$ and $\cT$, we have the following identifications (\cite{FP}, \cite{Kavruk}): \[ (\cS \otimes_{min} \cT)^d = \cS^d \otimes_{max} T^d \quad \text{ and } \quad (\cS \otimes_{max} \cT)^d = \cS^d \otimes_{min} \cT^d. \] Moreover, whenever $\cT$ is finite-dimensional, we may identify the positive cone of $M_n(\cS \otimes_{min} \cT^d) \cong M_n(\cS) \otimes_{min} \cT^d$ with the set of completely positive maps from $\cT$ to $M_n(\cS)$ via the identification of the elementary tensor $(s_{ij}) \otimes t^d$ with the map $\varphi: \cT \to M_n(\cS)$ defined by $\varphi(t) = t^d(t) \cdot (s_{ij})$.

We will also make use of the following observations by Kavruk \cite{Kavruk}, which states that for any integer $k \in \mathbb{N}$ and finite-dimensional operator system $\cS$, we have the identifications $(S^{k-max})^d = (S^d)^{k-min}$, and $(S^{k-min})^d = (S^d)^{k-max}$. Hence, the dual of a $k$-minimal operator system is $k$-maximal, and the dual of a $k$-maximal operator system is $k$-minimal.

\subsection{Nuclearity properties}

Suppose $\tau$ and $\sigma$ are two operator system tensor products. An operator system $\cS$ is said to be $(\tau, \sigma)$-nuclear if $\cS \otimes_{\tau} \cT = \cS \otimes_{\sigma} \cT$ for every operator system $\cT$. Nuclearity of operator systems has been studied by many authors over the past 15 years (see e.g. \cite{KPTT1}, \cite{KPTT2}, \cite{Kavruk}, \cite{HP}, \cite{Htp}), where many nuclearity properties have been linked to approximate factorization properties which we summarize here.

\begin{itemize}
    \item An operator system $\cS$ is said to have the \textit{completely positive approximation property} provided that there exists a net of maps $\varphi_{\lambda}: \cS \to M_{n_{\lambda}}$, $\psi_{\lambda}: M_{n_{\lambda}} \to \cS$ such that $\psi_{\lambda} \circ \varphi_{\lambda} \to id$ in point-norm. This property was shown by Han and Paulsen in \cite{HP} to be equivalent to $(min,max)$-nuclearity of $\cS$, and generalized the same result for C*-algebras studied by Choi and Effros in \cite{CEapprox} and Kirchberg in \cite{KiCPAP}. The class of (min,max)-nuclear operator systems includes that of nuclear C*-algebras such as matrix algebras $M_n, n \in \mathbb N$, and commutative C*-algebras. Furthermore, Tsirelson's non-commutative 1-cube $NC(1)$ is a (min,max)-nuclear operator system \cite{FKPT1}.
    
    \begin{figure}[h!]
    \centering
        \begin{tikzcd}
        & M_{n_{\lambda}} \arrow[rd, "\psi_{\lambda}" black] \\
        \mathcal{S} \arrow[rr, "id"] \arrow[ru, "\varphi_{\lambda}"]  & & \mathcal{S}
        \end{tikzcd}
    \caption{Completely positive approximation property}
    \end{figure}

    \item An operator system $\cS$ is said to have the \textit{weak expectation property} (or WEP) provided that the inclusion of $\cS$ into its bidual $\cS^{**}$ extends to a ucp map $\varphi: I(\cS) \to \cS^{**}$. This is equivalent to $\cS$ being $(el,max)$-nuclear. One implication was shown in \cite{KPTT2}, while the other was shown in \cite{Htp}

    \begin{figure}[h!]
    \centering
        \begin{tikzcd}
        I(\mathcal{S}) \arrow[dashrightarrow, rd, "\varphi" black] \\
        \mathcal \mathcal{S} \arrow[u, hook] \arrow[r, "i" black, hook] & \mathcal{S}^{**}
        \end{tikzcd}
    \caption{The weak expectation property (WEP).}
    \end{figure}

    \item An operator system $\cS$ is said to have the \textit{double commutant expectation property} (or DCEP) provided that every complete order embedding $\pi: \cS \to B(H)$ extends to a ucp map $\varphi: I(\cS) \to \pi(\cS)''$. This property was introduced in \cite{KPTT2} where it was shown to be equivalent to $(el,c)$-nuclearity.

    \begin{figure}[h!]
    \centering
        \begin{tikzcd}
        I(\mathcal{S}) \arrow[dashrightarrow, rd, "\varphi" black] & \\
        \mathcal \mathcal{S} \arrow[u, hook] \arrow[r, "\pi" black] & \pi(\mathcal{S})'' \arrow[r, hook] & B(H)
        \end{tikzcd}
    \caption{The double commutant expectation property (DCEP).}
    \end{figure}

    \item An operator system $\cS$ is said to have the the \textit{local lifting property} (or LLP) if for every finite-dimensional subsystem $\cS_0 \subseteq \cS$ and every ucp map $\varphi: \cS \to \cA/I$ (where $\cA$ is a C*-algebra and $I \subseteq \cA$ is a two-sided closed ideal), the restriction of $\varphi$ to $\cS_0$ admits a ucp lift $\widetilde{\varphi}: \cS_0 \to \cA$. LLP was shown to be equivalent to $(min,er)$-nuclearity in \cite{KPTT2}. Due to a classical result of Kirchberg \cite{KirLLP}, it is well-known that for any free group $F$, the full group C*-algebra $C^*(F)$ has the LLP. In a similar fashion, for $n \geq 2$, let \[ \mathcal S_n:= \text{span}\{e,u_i, u_i^*: i = 1,2,\dots, n\} \subset C^*(\mathbb F_n),\] denote the operator system spanned by the free unitary generators of $C^*(\mathbb F_n)$. Then $S_n$ has the LLP, and thus is (min,er)-nuclear. Furthermore, for the universal $\mathrm{C}^*$-algebra $\mathcal{U}(n)$ of $n^2$-generators $(u_{ij})$ with the restriction that the matrix $(u_{ij})$ is unitary, the operator subsystem
        \[\mathcal V_n = \mathrm{span} \{1, u_{ij}, u_{ij}^* \, \vert \, i,j = 1, \ldots, n \} \subseteq \mathcal{U}(n),\] is (min, er)-nuclear \cite{Harris}.

    \begin{figure}[h!]
    \centering
        \begin{tikzcd}
        & & \cA \arrow[d, "q" ] \\
        \mathcal{S}_0 \arrow[urr, "\widetilde{\varphi}"] \arrow[r, hook] & \mathcal{S} \arrow[r, "\varphi" black] & \cA/I
        \end{tikzcd}
    \caption{The local lifting property (LLP).}
    \end{figure}

    \item An operator system $\cS$ is said to be \textit{exact} if for every C*-algebra $\cA$ and closed two-sided ideal $I \subseteq \cA$ the natural mapping
    \[ (\cS \otimes_{min} \cA) / (\cS \otimes_{min} I) \to \cS \otimes_{min} (\cA /I) \]
    is a complete order isomorphism. It was shown in \cite{KPTT2} that an operator system is exact precisely when it is $(min,el)$-nuclear.
    
\end{itemize}

\subsection{Han-Paulsen Approximate Factorization Theorem}

The proof that (min,max)-nuclearity is equivalent to the CPAP is an application of the following approximate factorization theorem due to Han and Paulsen.

\begin{theorem}[Han-Paulsen] \label{thm: Han-Paulsen}
    Let $\phi: S \to T$ be a ucp map. Then the following are equivalent.
    \begin{enumerate}
        \item For every operator system $\cR$, $id \otimes \phi: \cR \otimes_{min} \cS \to \cR \otimes_{max} \cT$ is ucp.
        \item For every finite dimensional operator system $E$, $id \otimes \phi: E \otimes_{min} \cS \to E \otimes_{max} \cT$ is ucp.
        \item There exist nets of ucp maps $\phi_{\lambda}: \cS \to M_{n_{\lambda}}$ and $\psi_{\lambda}: M_{n_{\lambda}} \to \cT$ such that $\psi_{\lambda} \circ \phi_{\lambda} \to \phi$ in the point-norm topology.
    \end{enumerate}
\end{theorem}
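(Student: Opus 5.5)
We prove the cycle (1) $\Rightarrow$ (2) $\Rightarrow$ (3) $\Rightarrow$ (1). The step (1) $\Rightarrow$ (2) is trivial, since (2) is the special case of (1) with $\cR$ finite-dimensional.

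For (3) $\Rightarrow$ (1), fix $\cR$ and $x \in M_n(\cR \otimes_{min} \cS)^+$. The map $id \otimes \phi_\lambda : \cR \otimes_{min} \cS \to \cR \otimes_{min} M_{n_\lambda}$ is ucp by functoriality of $\min$. Next we use the standard fact that $\cR \otimes_{min} M_k = \cR \otimes_{max} M_k = M_k(\cR)$. One sees this from the maximal cone: a positive element of $M_m(M_k(\cR))$ is of the form $\alpha^*(P \otimes E)\alpha$ with $P \in M_m(M_k(\cR))^+$ and $E = (e_{ij}) \in M_k(M_k)^+$ the canonical rank-one positive matrix. Hence $id \otimes \psi_\lambda : \cR \otimes_{max} M_{n_\lambda} \to \cR \otimes_{max} \cT$ is ucp by functoriality of $\max$. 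It follows that $(id \otimes \psi_\lambda\phi_\lambda)^{(n)}(x) \geq 0$ in $M_n(\cR \otimes_{max} \cT)$. Since $x$ is a finite sum of elementary tensors and $\psi_\lambda \phi_\lambda \to \phi$ point-norm, we get $(id\otimes\psi_\lambda\phi_\lambda)^{(n)}(x) \to (id \otimes \phi)^{(n)}(x)$ in norm. Norm convergence is valid here because the max norm on a fixed finite sum of elementary tensors is dominated by the projective-type estimate $\sum \|r_i\|\|t_i\|$. The cone of $M_n(\cR\otimes_{max}\cT)$ is norm closed (the order is Archimedean), so $(id\otimes\phi)^{(n)}(x) \geq 0$.

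The main step is (2) $\Rightarrow$ (3), and we use duality. By a standard Hahn--Banach/convexity argument, it suffices to show that for each finite-dimensional operator subsystem $E \subseteq \cS$ (containing any prescribed finite set), the restriction $\phi|_E$ lies in the point-norm closure of the convex set of maps factoring through matrix algebras. This set is convex because direct sums of matrix algebras embed in a larger matrix algebra.

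Fix such $E$. The map $\phi|_E: E \to \cT$ corresponds to an element $X_\phi \in E^d \otimes \cT$. Since $E^d$ is finite-dimensional with order unit, and $id_{E^d} \in (E^d \otimes_{min} E)^+$ (it corresponds to the cp map $id: E \to E$), hypothesis (2) applied with the finite-dimensional system $E^d$ shows that $id\otimes\phi$ carries $E^d \otimes_{min} \cS \supseteq E^d\otimes_{min} E$ (by injectivity of $\min$) into $E^d\otimes_{max}\cT$. Hence $X_\phi \in (E^d \otimes_{max} \cT)^+$. By the definition of the max cone, for each $\epsilon>0$ we can write $X_\phi + \epsilon\, \varphi_0 \otimes 1 = \alpha^*(P \otimes Q)\alpha$ with $P \in M_p(E^d)^+$, $Q \in M_q(\cT)^+$, and $\alpha$ scalar. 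Here $P$ is a cp map $E \to M_p$, and $\beta \mapsto \alpha^*(\beta \otimes Q)\alpha$ defines a cp map $M_p \to \cT$. So $\phi|_E + \epsilon\varphi_0(\cdot)1$ factors cp through $M_p$.

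The remaining obstacle is to make these factors unital. First normalize: replace $P(\cdot)$ by $P(1)^{-1/2}P(\cdot)P(1)^{-1/2}$, restricting to the support projection if $P(1)$ is not invertible, and adjust the second map accordingly, so that the first map is ucp. The second map is then cp with value $\phi(1)+\epsilon\varphi_0(1)1 = (1+\epsilon c)1$ at the identity. Rescaling by $(1+\epsilon c)^{-1}$ gives ucp maps $\varphi, \psi$ with $\|\psi\varphi - \phi\|_E \to 0$ as $\epsilon \to 0$. Finally, extend $\varphi$ from $E$ to $\cS$ using Arveson's extension theorem ($M_p$ is injective). Indexing by pairs $(E,\epsilon)$ produces the required net, completing (2) $\Rightarrow$ (3).
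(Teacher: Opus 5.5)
Your proposal is correct and follows the Han--Paulsen argument that the paper cites from \cite{HP} rather than reproving; the paper only remarks that the net in (2) $\Rightarrow$ (3) is indexed by pairs $(E,\epsilon)$, exactly as you index it. As in \cite{HP}, you apply $id\otimes\phi$ to the positive tensor of $id_E$ in $E^d\otimes_{min}E$, use (2) to land in $(E^d\otimes_{max}\cT)^+$, read the factorization through $M_p$ off the max cone, and then normalize; the Hahn--Banach/convexity reduction you mention is unnecessary, since your construction already gives norm estimates on each $E$ directly.
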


In (3) of Theorem \ref{thm: Han-Paulsen}, nets can be replaced with sequences when $\dim(\cS) < \infty$. This is seen in the ``(2) implies (3)'' portion of the proof from \cite{HP}: the net is defined as pairs $(E,\epsilon)$ where $\epsilon > 0$ and $E \subseteq \cS$ is a finite-dimensional subsystem with partial ordering defined by $(E, \epsilon) \leq (E', \epsilon')$ if and only if $E \subseteq E'$ and $\epsilon \geq \epsilon'$. In the case when $\dim(\cS) < \infty$, it suffices to consider the subsequence $(\cS, \frac{1}{n})$.

\section{Approximate Factorization Properties} \label{sec: Fac Props}

In this section, we describe how most nuclearity properties studied in the literature on operator systems have equivalent formulations as approximate factorization properties. All of these properties involve an approximate factorization of the inclusion of the underlying operator system $\cS$ into various canonical C*-algebras containing $\cS$ through specialized classes of operator systems.

\subsection{(min,el) nuclearity}

We say that an operator system has the \textbf{injective envelope approximation property} (IEAP) if the inclusion $i: \cS \to I(\cS)$ approximately factors through matrix algebras --- i.e. there exists a net $\Lambda$ and ucp maps $\varphi_{\lambda}: \cS \to M_{n_{\lambda}}$ and $\psi_{\lambda}: M_{n_{\lambda}} \to I(\cS)$ such that for every $x \in \cS$, $\psi_{\lambda} \circ \varphi_{\lambda} (x) \to i(x)$. 

\begin{figure}[h!]
\centering
    \begin{tikzcd}
    & M_{n_{\lambda}} \arrow[rd, "\psi_{\lambda}" black] \\
    \mathcal{S} \arrow[rr, "i"] \arrow[ru, "\varphi_{\lambda}"]  & & I(\cS)
    \end{tikzcd}
\caption{The injective envelope approximation property.}
\end{figure}

\begin{theorem} \label{t:min-el=injcpap}
    Let $\cS$ be an operator system. The the following statements are equivalent.
    \begin{enumerate}
        \item $\cS$ is (min,el)-nuclear.
        \item $\cS$ is exact.
        \item $\cS$ has the IEAP.
    \end{enumerate}
\end{theorem}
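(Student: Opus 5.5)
The equivalence of (1) and (2) is the result of \cite{KPTT2} quoted in the preliminaries, so the plan is to prove (1)$\Leftrightarrow$(3). In both directions I will apply the Han--Paulsen theorem (Theorem \ref{thm: Han-Paulsen}) to the ucp inclusion $\phi = i: \cS \to I(\cS)$. By that theorem, (3) is equivalent to the statement that
\[ id \otimes i: \cR \otimes_{min} \cS \to \cR \otimes_{max} I(\cS) \]
is ucp for every operator system $\cR$. So everything reduces to matching this condition with (min,el)-nuclearity.

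For (1)$\Rightarrow$(3), fix $\cR$. By the definition of $\otimes_{el}$ and the identification $\cS \otimes_{el} \cR \cong \cR \otimes_{er} \cS$, the positive cones of $\cR \otimes_{er} \cS$ are those inherited from $\cR \otimes_{max} I(\cS)$. Hence $id \otimes i : \cR \otimes_{er} \cS \to \cR \otimes_{max} I(\cS)$ is a complete order embedding. Now (min,el)-nuclearity gives $\cS \otimes_{min} \cR = \cS \otimes_{el} \cR$. By symmetry of $\otimes_{min}$ and the flip identification, this yields $\cR \otimes_{min} \cS = \cR \otimes_{er} \cS$. Composing, $id \otimes i$ is ucp from $\cR \otimes_{min}\cS$ into $\cR \otimes_{max} I(\cS)$, and Han--Paulsen produces the approximate factorization through matrix algebras.

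For (3)$\Rightarrow$(1), Han--Paulsen run backwards shows that $id\otimes i: \cR\otimes_{min}\cS \to \cR\otimes_{max} I(\cS)$ is ucp for all $\cR$. Let $x \in M_n(\cS \otimes_{min} \cR)^+$. After flipping, $x$ is positive in $M_n(\cR \otimes_{min} \cS)$, so its image is positive in $M_n(\cR \otimes_{max} I(\cS)) \cong M_n(I(\cS) \otimes_{max} \cR)$, using symmetry of max. By definition of the el-tensor product, $x \in M_n(\cS \otimes_{el} \cR)^+$. Since $min \preceq el$ always, the two structures agree, which is (min,el)-nuclearity.

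The only delicate point is the bookkeeping of the flip: Han--Paulsen places the auxiliary system $\cR$ on the left, while the el-product puts the injective envelope on the left factor. I will handle this by checking that the flip map $\cS \otimes_{el} \cR \to \cR \otimes_{er} \cS$ is a complete order isomorphism, which the preliminaries state, and that min and max are symmetric. The theorem holds as stated for nets; when $\dim(\cS) < \infty$ the net can be taken to be a sequence.
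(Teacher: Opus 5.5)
Your proposal is correct and follows the paper's route: apply the Han--Paulsen theorem to the inclusion $i:\cS\to I(\cS)$, and note that the image of $\cS\otimes_{min}\cR$ inside $I(\cS)\otimes_{max}\cR$ is $\cS\otimes_{el}\cR$, so ``$id\otimes i$ is ucp from the min to the max product for every $\cR$'' is the same as (min,el)-nuclearity, with (1)$\Leftrightarrow$(2) quoted from \cite{KPTT2}. Your bookkeeping of the flip, via $\cS\otimes_{el}\cR\cong\cR\otimes_{er}\cS$ and the symmetry of min and max, spells out what the paper compresses into ``the max tensor product is commutative.''
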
 

\begin{proof}
    Let $i: \cS \to I(\cS)$ be the inclusion of $\cS$ into its injective envelope. Then $\cS \otimes_{min} \cR = \cS \otimes_{el} \cR$ for every operator system $\cR$ if and only if the map $i \otimes id: \cS \otimes_{min} \cR \to I(\cS) \otimes_{max} \cR$ is ucp, since the image of $\cS \otimes_{min} \cR$ under $i \otimes id$ is $\cS \otimes_{el} \cR \subseteq I(\cS) \otimes_{max} \cR$. Since the max tensor product is commutative, the statement follows from Theorem \ref{thm: Han-Paulsen}.
\end{proof}

\begin{remark}
    \emph{It is easy to see that $I(\cS)$ in the IEAP can be replaced by $B(H)$, where $\cS$ embeds completely isometrically into $B(H)$, by considering the conditional expectation $\pi: B(H) \to I(\cS)$. In the case when $\cS$ is a C*-algebra, this recovers the well-known property that a C*-algebra is exact if and only if it admits a nuclear embedding into $B(H)$ (c.f. \cite{BrownOzawa}).}
\end{remark}

From the CPAP to IEAP, we can obtain an alternative approximate factorization property that does not involve the injective envelope. We say that an operator system $\cS$ has the \textbf{minimal approximation property} (minimal AP) if for every finite-dimensional operator subsystem $E \subseteq \cS$, the identity map $id: E \to E$ approximately factors through $k$-minimal operator systems --- i.e., there exists a net $\Lambda$ and, for every $\lambda \in \Lambda$, an integer $n_{\lambda} \in \mathbb{N}$, a $n_{\lambda}$-minimal operator system $S_{\lambda}$, and ucp maps $\varphi_{\lambda}: E \to S_{\lambda}$ and $\psi_{\lambda}: S_{\lambda} \to E$ such that for every $x \in E$, $\psi_{\lambda} \circ \varphi_{\lambda} \to i(x)$. In the case when $\cS$ is finite-dimensional, we can take $E = \cS$ and replace the net with a sequence.

\begin{figure}[h!]
\centering
    \begin{tikzcd}
    & S_{\lambda}^{k_{\lambda}-min} \arrow[rd, "\psi_{\lambda}" black] \\
    E \arrow[rr, "id"] \arrow[ru, "\varphi_{\lambda}"]  & & E \subseteq \cS
    \end{tikzcd}
\caption{The minimal approximation property.}
\end{figure}

To prove the next result, we need the following technical lemma. Essentially, this lemma says that if $E$ is a finite-dimensional operator system and $\pi: E \to B(H)$ approximates the identity map (where $E \subseteq B(H)$), then $\pi^{-1}$ is approximately ucp on its range. We will use this lemma later to replace an approximately correct range by an honest ucp map into the desired finite-dimensional operator subsystem. 

\begin{lemma} \label{lem: inverse approx cp}
    Let $\cA$ be a C*-algebra and suppose that $E \subseteq \cA$ is a finite-dimensional subsystem and $0 < \epsilon < 1/2$. Suppose that $\pi: E \to \cA$ is an injective ucp map satisfying $\| \pi(x) - x \| < \epsilon$ for every $x \in E$ with $\|x\| \leq 1$. Then there exists a positive linear functional $\eta: \pi(E) \to \mathbb{C}$ such that the map $\rho(x) = \pi^{-1}(x) + \eta(x)I$ is completely positive and such that \[ 0 < \eta(I) \leq C \epsilon \] where $C$ a constant which depends only on the Banach space $(E, \|\cdot\|)$.
\end{lemma}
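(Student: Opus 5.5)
The plan is to realize $\pi(E)$ as an operator system, view $\pi^{-1}:\pi(E)\to E$ as a unital map that is close to the inclusion $\pi(E)\hookrightarrow \cA$, and then absorb the failure of complete positivity into a multiple of a faithful state on the finite-dimensional space $\pi(E)$. Note that $\pi(E)$ is a self-adjoint unital subspace of $\cA$, since $\pi$ is ucp, hence $*$-preserving. Because $\pi$ is injective, $\pi^{-1}:\pi(E)\to E\subseteq\cA$ is a well-defined unital self-adjoint linear map.

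\emph{Step 1: a completely bounded estimate.} For $y=\pi(x)$ we have $\|\pi^{-1}(y)-y\| = \|x-\pi(x)\|<\epsilon\|x\|$. Moreover $\|x\|\le \|\pi(x)\|+\epsilon\|x\|$, so $\|x\|\le 2\|y\|$ because $\epsilon<1/2$. Hence $\delta:=\pi^{-1}-\mathrm{id}$, as a map $\pi(E)\to\cA$, has norm at most $2\epsilon$. Since $E$ is finite-dimensional, say of dimension $d$, the cb-norm of any map out of a $d$-dimensional space is at most a constant $C_0$ times its norm; one can take $C_0=d$, using an Auerbach basis. This gives $\|\delta\|_{cb}\le 2C_0\epsilon$. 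The constant should depend only on the Banach space $E$, so I would phrase the bound through $E$, using that $\pi$ is a linear isomorphism with $\|\pi\|,\|\pi^{-1}\|\le 2$.

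\emph{Step 2: correct by a state.} Consider a positive matrix $P=(y_{ij})\in M_n(\pi(E))^+$. Then $\pi^{-1(n)}(P)=P+\delta^{(n)}(P)\ge -2C_0\epsilon\|P\|\, I_n$. So it suffices to find a positive functional $\eta$ on $\pi(E)$ with $\eta^{(n)}(P)\ge c\|P\|I_n$ for all positive $P$, with $c>0$ depending only on $E$; we then rescale $\eta$ by $2C_0\epsilon/c$. Here $\eta^{(n)}(P)I$ means $(\eta(y_{ij})I)$, and the ampliation $\eta^{(n)}$ of a functional on a positive matrix is scalar positive. This is the main obstacle. A plain multiple of a state need not dominate $\|P\|$ uniformly in $n$. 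For example, $(\eta(y_{ij}))$ can be singular for a nonzero positive $P$, e.g.\ $P=\sum v v^*\otimes p$ with $p$ positive and $\eta(p)$ small.

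\emph{Step 3: use the duality picture to resolve the obstacle.} By Choi--Effros, $\pi(E)^d$ has an interior (Archimedean order unit) point $\eta_0$, i.e.\ a faithful state. Faithfulness gives $\eta_0(p)\ge c\|p\|$ for $p\in\pi(E)^+$, with $c>0$ by compactness. For matrices, I would instead reason in the dual: complete positivity of $\rho$ is equivalent to positivity of the tensor $\rho\in \pi(E)^d\otimes_{\min}\cA$, by the identification in the Duality subsection. Write $\rho = j + \delta + \eta\otimes I$, where $j$ is the inclusion, a positive element. Then $\delta$ is an element of norm $\le 2C_0\epsilon$ in the finite-dimensional operator system $\pi(E)^d\otimes_{\min}\cA$, with unit $\eta_0\otimes I$. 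The Archimedean order unit property gives $\delta+\|\delta\|\,(\eta_0\otimes I)\ge 0$, using the order-unit norm on the hermitian part of $\delta$, which is self-adjoint. Taking $\eta=\|\delta\|\eta_0$ then gives $\eta(I)\le C\epsilon$. Here $C$ absorbs $C_0$ and the comparison between the order-unit norm of $\pi(E)^d$ and the cb-norm.

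The delicate point is making $C$ independent of $\pi$. I would address it by pulling $\eta_0$ back to a fixed faithful state on $E$ and checking that the norm equivalences change by at most factors of $2$.
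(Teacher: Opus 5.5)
\textbf{Gap: the constant $C$ in Step 3 is not shown to be independent of $\pi$.}

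You are right to abandon Step 2, and Step 3 is qualitatively sound. If $\eta_0$ is a faithful state used as the unit of $\pi(E)^d$, then in $\pi(E)^d\otimes_{\mathrm{min}}\mathcal{A}$ the self-adjoint tensor $\delta$ satisfies $\delta+\|\delta\|\,\eta_0\otimes I\geq 0$. Hence $\pi^{-1}+\|\delta\|\,\eta_0(\cdot)I$ is completely positive.

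What is missing is the quantitative claim $\eta(I)\leq C\epsilon$ with $C$ independent of $\pi$. This is the real content of the lemma: in Theorem~\ref{thm: minimal CPAP} it is applied to a net $\pi_\lambda$, and one needs $\eta_\lambda(I)\to 0$.

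\begin{itemize}
\item The norm $\|\delta\|$ in your Step 3 is the min norm for the operator space structure that $\eta_0$ induces on $\pi(E)^d$. It is not $\|\delta\|_{cb}$.
\item These two norms are comparable only up to a factor controlled by $c(\eta_0)=\inf\{\eta_0(q): q\in\pi(E)^+,\ \|q\|=1\}$. The dependence is genuine: for $\delta=-\phi\otimes I$ with $\phi$ a state, positivity of $\delta+t\,\eta_0\otimes I$ forces $t\geq \phi(q)/\eta_0(q)$ for all $q\in\pi(E)^+$.
\item So you need a state on $\pi(E)$ whose faithfulness constant is bounded below independently of $\pi$. You do not supply one.
\item Your proposed fix fails as stated. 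For a faithful state $\omega$ on $E$, the functional $\omega\circ\pi^{-1}$ need not be positive on $\pi(E)$, precisely because $\pi^{-1}$ need not be positive.
\item Restricting a state extension $\tilde\omega$ of $\omega$ to $\pi(E)$ does give a positive functional. But the natural estimate is only $\tilde\omega(q)\geq (c(\omega)-4\epsilon)\|q\|$. That is useful only for $\epsilon$ small relative to $c(\omega)$, not for the full range $0<\epsilon<1/2$.
\end{itemize}

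\textbf{The missing idea: no interior point is needed.}

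\begin{itemize}
\item The paper takes a basis $x_1=I,x_2,\dots,x_d$ of self-adjoint unit vectors in $E$, sets $y_i=\pi(x_i)$, and lets $\sigma_i$ be the dual functionals on $\pi(E)$.
\item The estimate $\|\pi(x)\|\geq(1-\epsilon)\|x\|$ gives $\|\sigma_i\|\leq(1-\epsilon)^{-1}M$, with $M$ depending only on $E$.
\item The inclusion $\pi(E)\hookrightarrow\mathcal{A}$ corresponds to $\sum_i\sigma_i\otimes y_i\geq 0$, and $\pi^{-1}$ corresponds to $\sum_i\sigma_i\otimes x_i$.
\item Jordan-decompose $\sigma_i=\sigma_i^+-\sigma_i^-$. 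Since $\sigma_i^\pm\geq 0$ and $\|y_i-x_i\|<\epsilon$, one has termwise $\sigma_i^{+}\otimes(y_i-x_i)\leq\epsilon\,\sigma_i^{+}\otimes I$ and $-\sigma_i^{-}\otimes(y_i-x_i)\leq\epsilon\,\sigma_i^{-}\otimes I$.
\item Summing, $\sum_i\sigma_i\otimes x_i\geq \sum_i\sigma_i\otimes y_i-2\epsilon\,\varphi\otimes I\geq -2\epsilon\,\varphi\otimes I$, where $\varphi=\sum_i(\sigma_i^++\sigma_i^-)$.
\item So $\eta=2\epsilon\varphi$ works, with $\eta(I)\leq 2\epsilon\sum_i\|\sigma_i\|<4dM\epsilon$.
\end{itemize}

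This bound is uniform in $\pi$ and holds for every $\epsilon<1/2$. It needs neither your cb estimate nor any order-unit comparison. The functional it produces is tailored to $\delta$ and need not be faithful.
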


\begin{proof}
    Let $x_1, \dots, x_d \in E$ be a linearly independent set of self-adjoint elements which span $E$ and satisfy $\|x_i\| = 1$ for each $i$ and $x_1 = I$. Let $y_i = \pi(x_i)$ for each $i$. Then $y_1 = I$ and $1 - \epsilon < \|y_i\| \leq 1$ for each $i > 1$. For each $i$, let $\sigma_i: \pi(E) \to \mathbb{C}$ denote the coordinate function defined on the basis $\{y_1, \dots, y_d\}$ by $\sigma_i(y_j) = \delta_{ij}$, where $\delta_{ij}$ is the Dirac delta function. Similarly, let $\psi_i: E \to \mathbb{C}$ denote the coordinate functionals for the basis $\{x_1, \dots, x_n\}$. Choose $M > 0$ such that $\|\psi_i\| \leq M$ for all $i$. Note that $M$ depends only on the Banach space $(E,\|\cdot\|)$.
    
    Let $y \in \pi(E)$ such that $\|y\| = 1$ and $y=y^*$. Assume $y = \sum_k \alpha_k y_k$ for $\alpha_k \in \mathbb{R}$. Let $x' = \|\sum_k \alpha_k x_k\|^{-1} \sum_k \alpha_k x_k$. Since $\|x'\| = 1$, \[ 1 - \epsilon \leq \|\sum_k \alpha_k x_k\|^{-1} \|\sum \alpha_k y_k \| \leq 1. \] So $1 \leq \| \sum \alpha_k x_k \| \leq (1-\epsilon)^{-1}$. Also \[ |\sigma_i(y)| = |\alpha_i| = |\psi_i(\sum \alpha_k x_k)| \leq M \|\sum \alpha_k x_k \| \leq (1-\epsilon)^{-1} M. \] So we have $\|\sigma_i\| \leq (1-\epsilon)^{-1} M$ for each $i$.

    Recall that $(\pi(E)^d \otimes_{min} \cA)^+ = CP(\pi(E), \cA)$ via the identification of an element $\sum_{i=0}^d \sigma_i \otimes z_i$ with the linear map from $\pi(E)$ to $\cA$ sending $y_i$ to $z_i$. Notice that the identity map on $\pi(E)$ is represented by the tensor $\sum \sigma_i \otimes y_i$ in $\pi(E)^d \otimes_{min} \cA$, and this map is completely positive. 

    For each $i$, since $\sigma_i$ is a bounded self-adjoint functional, there exist positive functionals $\sigma_i^+$ and $\sigma_i^-$ such that $\sigma_i = \sigma_i^+ - \sigma_i^-$ and $\|\sigma_i\| = \|\sigma_i^+\| + \|\sigma_i^-\|$. Define $\varphi := \sum_k \sigma_k^+ + \sigma_k^-$. We note that $\| \varphi \| \leq \sum_k \|\sigma_k^+\| + \|\sigma_k^-\| = \sum_k \|\sigma_k\| \leq d M (1-\epsilon)^{-1}$, and thus $\varphi(I) = \|\varphi\| \leq d M (1-\epsilon)^{-1}$. Now for each $i$ and $\dagger \in \{+,-\}$,
    \[ \pm \left( \sum \sigma_i^{\dagger} \otimes y_i - \sum \sigma_i^{\dagger} \otimes x_i \right) = \sum \sigma_i^{\dagger} \otimes \pm (y_i-x_i) \leq \sum \sigma_i^{\dagger} \otimes (\epsilon I) \leq \epsilon \varphi \otimes I \]
    since $\varphi_i^{\dagger} \leq \varphi$. Since $\sigma_i = \sigma_i^+ - \sigma_i^-$, we conclude that
    \[ \sum \sigma_i \otimes y_i - \sum \sigma_i \otimes x_i \leq 2\epsilon \varphi \otimes I. \]
    Since $\sum \sigma_i \otimes y_i$ corresponds to the identity map on $\pi(E)$, $\sum \sigma_i \otimes x_i$ corresponds to $\pi^{-1}$, and $\varphi \otimes I$ is completely positive, we conclude that $\rho := \pi^{-1} + 2 \epsilon \varphi \otimes I$ is completely positive. Let $\eta = 2\epsilon \varphi$. Then \[ \eta(I) \leq \dfrac{2 d M \epsilon}{1-\epsilon} < 4 d M \epsilon = C \epsilon \] where $C = 4d M$. Since the constant $C$ depends only on the Banach space $(E,\|\cdot\|)$, the proof is complete.
\end{proof}

\begin{theorem} \label{thm: minimal CPAP}
    Let $\cS$ be an operator system. Then $\cS$ has IEAP if and only if it has the minimal AP.
\end{theorem}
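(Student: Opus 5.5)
We prove both directions directly, using Theorem \ref{t:min-el=injcpap} where convenient; the hard direction is IEAP $\Rightarrow$ minimal AP, where Lemma \ref{lem: inverse approx cp} is needed to return from the $k$-minimal system into $E$.

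\textbf{Minimal AP $\Rightarrow$ IEAP.} Fix a finite-dimensional subsystem $E \subseteq \cS$ and maps $\varphi_\lambda : E \to S_\lambda$, $\psi_\lambda : S_\lambda \to E$ with $S_\lambda$ $n_\lambda$-minimal and $\psi_\lambda\circ\varphi_\lambda \to id_E$ pointwise.
\begin{enumerate}
\item Embed $S_\lambda \subseteq C(X_\lambda, M_{n_\lambda})$. By injectivity of $I(\cS)$, extend $\psi_\lambda$ to a ucp map $\tilde\psi_\lambda : C(X_\lambda, M_{n_\lambda}) \to I(\cS)$.
\item Since $C(X_\lambda,M_{n_\lambda}) \cong C(X_\lambda)\otimes M_{n_\lambda}$ is a nuclear C*-algebra, it has the CPAP. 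Hence $\tilde\psi_\lambda$ approximately factors through matrix algebras in point-norm.
\item Extend $\varphi_\lambda$ from $E$ to $\cS$ by Arveson extension into $C(X_\lambda,M_{n_\lambda})$, which is injective as a target only in the weak sense. To avoid this issue, instead verify $(min,el)$-nuclearity through Theorem \ref{t:min-el=injcpap}.
\item For $x \in M_n(\cS\otimes_{min}\cR)^+$ we must show $x \in M_n(I(\cS)\otimes_{max}\cR)^+$. Since $x$ lies in $M_n(E\otimes\cR)$ for some finite-dimensional $E$, apply $\varphi_\lambda\otimes id$ to land in $S_\lambda\otimes_{min}\cR \subseteq C(X_\lambda,M_{n_\lambda})\otimes_{min}\cR = C(X_\lambda,M_{n_\lambda})\otimes_{max}\cR$, using nuclearity of the C*-algebra and injectivity of min.
\item Apply $\tilde\psi_\lambda\otimes id$ into $I(\cS)\otimes_{max}\cR$. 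The images converge in norm to $x$, because $E\otimes\cR$ is finite-dimensional in the first leg. Positivity follows by the archimedean property.
\end{enumerate}

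\textbf{IEAP $\Rightarrow$ minimal AP.} Fix $E$ and $\epsilon$. By the IEAP we have $\varphi:\cS\to M_n$ and $\psi:M_n\to I(\cS)$ with $\|\psi\varphi(x)-x\|$ small on the unit ball of $E$.
\begin{enumerate}
\item Set $S = \psi(M_n)$ (or the operator system it spans). This is a quotient of $M_n$, but it is not obviously $k$-minimal. A better choice is $G = \varphi(E)\subseteq M_n$, which is $n$-minimal as a subsystem of $M_n$; this is the system we factor through.
\item The first map is $\varphi|_E : E\to G$.
\item For the return map $G \to E$ we need a ucp map close to $\psi|_G$ but with range in $E$, not merely in $I(\cS)$. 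Set $\pi=\psi\varphi|_E : E\to I(\cS)$, which is injective for small $\epsilon$. Lemma \ref{lem: inverse approx cp} (with $\cA=I(\cS)$) gives a cp map $\rho=\pi^{-1}+\eta(\cdot)I$ on $\pi(E)$, with $\eta(I)\le C\epsilon$.
\item Renormalize: $\rho' = (1+\eta(I))^{-1}\rho$ is ucp from $\pi(E)$ into $E$ and satisfies $\|\rho'-\pi^{-1}\| = O(\epsilon)$.
\item The remaining problem is that the honest range of $\psi|_G$ is $\psi(G)$, which may be larger than $\pi(E)$. This is the main obstacle. First try using $\varphi(E)=G$ exactly: then $\psi(G)=\pi(E)$, so $\rho'\circ\psi|_G : G\to E$ is ucp.
\item Compose: $\rho'\circ\psi\circ\varphi = \rho'\circ\pi$, which is within $O(\epsilon)$ of $id_E$ in norm on the unit ball, with the constant depending only on $E$.
\item Letting $\epsilon\to0$ yields the net, indexed as in Theorem \ref{thm: Han-Paulsen}.
\end{enumerate}

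The key steps are therefore two. In the forward direction, the injectivity argument must be handled via tensor products rather than by extending maps. In the reverse direction, correcting the range through Lemma \ref{lem: inverse approx cp} works only once $G$ is chosen as $\varphi(E)$, which forces $\psi(G)=\pi(E)$.
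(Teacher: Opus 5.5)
Your argument is correct and follows the paper's: the direction IEAP $\Rightarrow$ minimal AP is exactly the paper's proof (factor through $\varphi(E)\subseteq M_n$, observe that $\psi(\varphi(E))=\pi(E)$, and correct the range with Lemma \ref{lem: inverse approx cp}), and the converse proves (min,el)-nuclearity through the same kind of tensor diagram. The one variation is in that converse: where the paper cites Kavruk's result that $k$-minimal systems are (min,el)-nuclear and uses left injectivity of $\otimes_{el}$ to pass from $E$ to $\cS$, you embed $S_\lambda$ in the nuclear C*-algebra $C(X_\lambda,M_{n_\lambda})$ and extend $\psi_\lambda$ into the injective $I(\cS)$, landing directly in $I(\cS)\otimes_{max}\cR$; this inlines a proof of Kavruk's fact and makes the argument self-contained (your abandoned steps 2--3 in that direction are never used and can be deleted).
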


\begin{proof}
    First, assume that $\cS$ has the IEAP. Hence we have a net $\Lambda$ and ucp maps $\varphi_{\lambda}: \cS \to M_{n_{\lambda}}$ and $\psi_{\lambda}: M_{n_{\lambda}} \to I(\cS)$ such that for every $x \in \cS$, $\pi_{\lambda}(x) := \psi_{\lambda} \circ \varphi_{\lambda} (x) \to i(x)$. Fix $E \subseteq \cS$. Let $S_{\lambda} := \varphi_{\lambda}(E) \subseteq M_{n_{\lambda}}$. Then $S_{\lambda}$ is $n_{\lambda}$-minimal and finite-dimensional with $\dim(S_{\lambda}) \leq \dim(E)$. Let $E_{\lambda} = \psi_{\lambda}(S_{\lambda}) \subseteq I(\cS)$.

    Assume $0 < \epsilon < 1/2$. Since $\pi_{\lambda}$ converges to the identity map pointwise, and since the unit ball of $E$ is compact, there exists $\lambda_0$ such that $\|\pi_{\lambda}(x) - x \| < \epsilon$ for all $x$ in the unit ball of $E$ and $\lambda > \lambda_0$. Since every unit vector is mapped to a non-zero vector, $\pi_{\lambda}$ is injective. By the Lemma, there exists a faithful linear functional $\eta: E_{\lambda} \to \mathbb{C}$ such that $\eta(I) =: \delta \leq C \epsilon$ and $\pi_{\lambda}^{-1}: E_{\lambda} \to E$ satisfies $\pi_{\lambda}^{-1} + \eta \otimes I$ is completely positive. Hence the map $\rho_{\lambda}(x) := (1+\delta)^{-1}(\pi_{\lambda}^{-1} + \eta \otimes I)$ is ucp. So we have ucp maps $\varphi_{\lambda}: E \to S_{\lambda}$, $\psi_{\lambda}: S_{\lambda} \to E_{\lambda}$, and $\rho_{\lambda}: E_{\lambda} \to E$. Since $\delta \to 0$ as $\epsilon \to 0$, we see that the composition $\rho_{\lambda} \circ \psi_{\lambda} \circ \varphi_{\lambda}: E \to E$ also converges to the identity map pointwise. So $\cS$ has the minimal AP.

    Now assume that $\cS$ has the minimal AP. We will show that $\cS$ is (min,el)-nuclear and hence that $\cS$ has the IEAP. To this end, it suffices to show that each finite dimensional operator subsystem $E \subseteq S$ is (min,el)-nuclear. This is because the min and el tensor products are both left-injective, and thus, for any operator system $\cT$, $x = \sum_{i=1}^n s_i \otimes t_i \in M_n(\cS \otimes_{\tau} \cT)^+$ if and only if $x \in M_n(E \otimes_{\tau} \cT)$ for some finite-dimensional subsystem $E \subseteq \cS$ such that $s_i \in E$ for each $i$. (Recall that tensor products are defined as operator system structures on the algebraic tensor product, not their completion.) 
    
    Fix $E \subseteq \cS$. Since $\cS$ has the minimal CPAP, there exist a net $\Lambda$, integers $n_{\lambda} \in \mathbb{N}$, $n_{\lambda}$-minimal operator systems $S_{\lambda}$, and ucp maps $\varphi_{\lambda}: E \to S_{\lambda}$ and $\psi_{\lambda}: S_{\lambda} \to E$ such that the composition $\psi_{\lambda} \circ \varphi_{\lambda}$ converges to the identity map on $E$ pointwise. By \cite{Kavruk}, each $S_{\lambda}$ is (min,el)-nuclear. Consider the following diagram:

    \[
    \begin{tikzcd}
        & S_{\lambda} \otimes_{\min} \cT = S_{\lambda} \otimes_{el} \cT \arrow[rd, "\psi_{\lambda} \otimes id" black] \\
        E \otimes_{min} \cT \arrow[rr] \arrow[ru, "\varphi_{\lambda} \otimes id"]  & & E \otimes_{el} \cT \subseteq \cS \otimes_{el} \cT
        \end{tikzcd}
    \]

    By functoriality of min and el tensor products, this is a diagram of ucp maps. Since the composition $\psi_{\lambda} \circ \varphi_{\lambda}$ converges to the identity map on $E$, the same holds for the composition $(\psi_{\lambda} \circ \varphi_{\lambda}) \otimes id$. Hence the identity map from $E \otimes_{min} \cT \to E \otimes_{el} \cT$ is ucp. It follows that $E \otimes_{min} \cT = E \otimes_{el} \cT$. We conclude that $\cS$ is (min,el)-nuclear and hence it has the IEAP.
\end{proof}

\textbf{Remark}: From the proof, we see that we can assume without loss of generality that the $n_{\lambda}$-minimal systems $S_{\lambda}$ all arise as subsystems of matrix algebras, i.e. $S_{\lambda} \subseteq M_{n_{\lambda}}$. Thus, each finite dimensional subsystem of a (min,el)-nuclear operator system can be ``approximated'' (as an operator system) by a subsystem of a large enough matrix algebra. 

\subsection{(min,er)-nuclearity}

In the case when $\mathcal{S}$ is finite-dimensional, we can use the minimal AP to obtain a dual characterization for $(min,er)$-nuclearity. We say that a finite-dimensional operator system $\mathcal{S}$ has the \textbf{maximal approximation property} (or \textit{maximal AP}) if there exists a net $\Lambda$, $k_{\lambda}$-maximal operator systems $\cS_{\lambda}$, and ucp maps $\varphi_{\lambda}: \cS \to \cS_{\lambda}$, $\psi_{\lambda}: \cS_{\lambda} \to \cS$ such that $\psi_{\lambda} \circ \varphi_{\lambda}: \cS \to \cS$ converges to the identity map pointwise.

\begin{figure}[h!]
\centering
    \begin{tikzcd}
    & S_{\lambda}^{k_{\lambda}-max} \arrow[rd, "\psi_{\lambda}" black] \\
    \cS \arrow[rr, "id"] \arrow[ru, "\varphi_{\lambda}"]  & & \cS
    \end{tikzcd}
\caption{The maximal approximation property.}
\end{figure}

\begin{corollary} \label{Cor: max CPAP}
    A finite-dimensional operator system $\cS$ is (min,er)-nuclear if and only if it has the maximal AP.
\end{corollary}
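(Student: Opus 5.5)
The plan is to dualize Theorem~\ref{thm: minimal CPAP}. Since $\dim \cS < \infty$, we can fix a faithful state as order unit for $\cS^d$, so that $\cS^d$ is an operator system with $\cS^{dd} = \cS$. The first step is to show that $\cS$ is (min,er)-nuclear if and only if $\cS^d$ is (min,el)-nuclear, i.e.\ exact. I would route this through known finite-dimensional dualities due to Kavruk \cite{Kavruk}. The LLP of $\cS$ (which is (min,er)-nuclearity) is equivalent to exactness of $\cS^d$. This can be argued directly from the duality $(\cS\otimes_{min}\cT)^d=\cS^d\otimes_{max}\cT^d$ applied to finite-dimensional $\cT$. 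Since er is right injective and max is projective, one reduces (min,er)-nuclearity to testing finite-dimensional $\cT$, or to $\cT=C^*(\mathbb F_\infty)$ via the LLP formulation. Exactness of $\cS^d$ is handled the same way via quotients. I expect this step to be the main obstacle, and I would try first to quote it from \cite{Kavruk}, where $\cS$ has the LLP iff $\cS^d$ is exact for finite-dimensional $\cS$.

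Granting this, Theorem~\ref{t:min-el=injcpap} and Theorem~\ref{thm: minimal CPAP} (with $E=\cS^d$, using sequences in finite dimensions) say that $\cS^d$ is (min,el)-nuclear iff there are $n_\lambda$-minimal systems $G_\lambda$ and ucp maps $\alpha_\lambda:\cS^d\to G_\lambda$, $\beta_\lambda:G_\lambda\to\cS^d$ with $\beta_\lambda\circ\alpha_\lambda\to id$. By the remark after Theorem~\ref{thm: minimal CPAP}, the $G_\lambda$ may be taken as subsystems of $M_{n_\lambda}$, hence finite-dimensional.

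Next I would take adjoints. The duals $G_\lambda^d$ are $n_\lambda$-maximal by Kavruk's identification $(G^{k-min})^d=(G^d)^{k-max}$. The adjoint maps $\beta_\lambda^d:\cS\to G_\lambda^d$ and $\alpha_\lambda^d:G_\lambda^d\to\cS$ are completely positive, because the dual of a cp map between finite-dimensional systems is cp. Also $\alpha_\lambda^d\circ\beta_\lambda^d=(\beta_\lambda\circ\alpha_\lambda)^d\to id$, and pointwise convergence is the same as norm convergence in finite dimensions. These maps need not be unital, so I would renormalize. Choose the order unit of $G_\lambda^d$ to be $\beta_\lambda^d(I)$. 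This is an interior point once $\beta_\lambda^d(I)$ is faithful on $G_\lambda$, which can be arranged by first perturbing $\beta_\lambda$ slightly to be faithful, i.e.\ mixing with a small faithful map. A change of order unit on a finite-dimensional system does not affect $k$-maximality, since that property depends only on the cones. Then $\beta_\lambda^d$ is unital. The composite $\alpha_\lambda^d$ sends the unit to $\alpha_\lambda^d\beta_\lambda^d(I)\to I$, and a small correction (compressing by $h_\lambda^{-1/2}$, or the perturbation trick of Lemma~\ref{lem: inverse approx cp}) makes it ucp with an error tending to zero.

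The converse runs along the same path in reverse. Given the maximal AP for $\cS$, one dualizes to get a minimal AP for $\cS^d$, with the same normalization of units. This gives exactness of $\cS^d$, and hence (min,er)-nuclearity of $\cS$. Alternatively, one can argue directly as in Theorem~\ref{thm: minimal CPAP}. A $k$-maximal finite-dimensional system is (min,er)-nuclear, being dual to a $k$-minimal system, which is exact. Then functoriality of min and er gives $\cS\otimes_{min}\cT\to\cS\otimes_{er}\cT$ ucp as a pointwise limit of the ucp maps $(\psi_\lambda\circ\varphi_\lambda)\otimes id$.
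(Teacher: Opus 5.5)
Your route is the paper's. Kavruk's Theorem 6.6 turns (min,er)-nuclearity of $\cS$ into exactness of $\cS^d$. Theorem~\ref{thm: minimal CPAP} then gives the minimal AP for $\cS^d$, and Kavruk's $k$-minimal/$k$-maximal duality lets you dualize the factorization diagram.

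The paper never checks that the dual maps are unital, so your attention to this is welcome. Taking $\beta_\lambda^d(I)$ as the unit of $G_\lambda^d$, after a faithful perturbation of $\beta_\lambda$, is fine. However, neither correction you propose for $\alpha_\lambda^d$ works as stated. Write $h_\lambda=\alpha_\lambda^d\beta_\lambda^d(I)$ and let $u$ be the unit of $G_\lambda^d$.
\begin{enumerate}
\item Conjugating by $h_\lambda^{-1/2}$ takes you out of $\cS$, which is not an algebra.
\item The device of Lemma~\ref{lem: inverse approx cp} (add $\eta\otimes I$, divide by $1+\eta(u)$) only yields a unital map when the original map is already unital. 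Here $u$ goes to $(1+\eta(u))^{-1}(h_\lambda+\eta(u)I)$, which is not $I$ unless $h_\lambda$ is scalar.
\end{enumerate}
A correction that does work is the following. Let $\delta_\lambda=\|h_\lambda-I\|\to 0$, pick any state $\omega_\lambda$ on $G_\lambda^d$, and set
\[ \theta_\lambda(y)=(1+\delta_\lambda)^{-1}\alpha_\lambda^d(y)+\omega_\lambda(y)\bigl(I-(1+\delta_\lambda)^{-1}h_\lambda\bigr). \]
Since $(1-\delta_\lambda)I\le h_\lambda\le(1+\delta_\lambda)I$, the element $I-(1+\delta_\lambda)^{-1}h_\lambda$ lies in $\cS$ and satisfies $0\le I-(1+\delta_\lambda)^{-1}h_\lambda\le 2\delta_\lambda I$. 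Hence the second summand is a state times a positive element, which is completely positive. It follows that $\theta_\lambda$ is ucp into $\cS$. Using $\|\alpha_\lambda^d\|=\|h_\lambda\|\le 1+\delta_\lambda$, you get $\|\theta_\lambda-\alpha_\lambda^d\|\le 3\delta_\lambda$, so $\theta_\lambda\circ\beta_\lambda^d\to id$.

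For the converse, your direct functoriality argument sidesteps the unitality issue entirely and is cleaner than dualizing back. It still uses that the $k_\lambda$-maximal systems are finite-dimensional, in order to invoke Kavruk's duality; the paper's dualization needs the same thing. The paper's finite-dimensional definition of the maximal AP does not state this. It should be built into that definition, as it is in the paper's general version.
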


\begin{proof}
    %{\color{blue} This follows from Theorem \ref{thm: minimal CPAP} and Theorem 6.6 of \cite{Kavruk}, and the fact that the dual of a $k$-minimal system is $k$-maximal. In fact, we can take the $k$-maximal systems to be quotients of matrix algebras.}

    By Theorem 6.6 of \cite{Kavruk}, the operator system $\cS$ is (min,er)-nuclear if and only if its dual $\cS^d$ is (min,el)-nuclear. By Theorem \ref{thm: minimal CPAP}, this holds if and only if $\cS^d$ has the minimal AP. Furthermore, by Theorem 9.9 of \cite{Kavruk}, an operator system $\cR$ is $k$-maximal if and only if $\cR^d$ is $k$-minimal. Dualizing the diagram
    \[
        \begin{tikzcd}
        & S_{\lambda} \arrow[rd, "\psi_{\lambda}" black] \\
        \cS^d \arrow[rr, "id"] \arrow[ru, "\varphi_{\lambda}"]  & &  \cS^d
        \end{tikzcd}
    \]
    where $S_{\lambda}$ is $d_{\lambda}$-minimal and noting that $S_{\lambda}^d$ is $d_{\lambda}$-maximal, we see that $\cS$ has the minimal AP if and only if $\cS^d$ has the maximal AP.
\end{proof}

We now consider the general case of a (possibly infinite-dimensional) operator system $\cS$. To do this, we will make use of the notion of \textit{CP-stability} studied by I. Goldbring and the last author. An operator system $\cS$ is \textbf{CP-stable} if for every $\epsilon > 0$ and every finite-dimensional subsystem $E \subseteq \cS$, there exists a triple $(F,k,\delta)$ where $E \subseteq F \subseteq \cS$ is another finite-dimensional subsystem of $\cS$, $k$ is a positive integer, and $\delta > 0$ is a constant with the following property: if $\varphi: F \to \cA$ is a unital linear map from $F$ to a C*-algebra $\cA$ such that $\|\varphi^{(k)}\| < 1 + \delta$, then there exists a ucp map $\psi: E \to \cA$ such that $\|\psi - \varphi|_{E} \| < \epsilon$. In other words, approximately $k$-positive maps on $F$ restrict to approximately completely positive maps on $E$. It was shown in \cite{SinCP} that an operator system is CP-stable if and only if it has the local lifting property, or equivalently (by \cite{KPTT2}) is (min,er)-nuclear. For the convenience of the reader, we give a brief proof of the direction needed in our agruments: that LLP implies CP-stable.

\begin{proposition}
    Let $\cS$ be an operator system, and suppose that $\cS$ has the local lifting property. Then $\cS$ is CP stable.
\end{proposition}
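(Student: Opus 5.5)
We argue by contradiction with an ultraproduct construction. Suppose $\cS$ has the LLP but is not CP-stable. Then there are $\epsilon > 0$ and a finite-dimensional subsystem $E \subseteq \cS$ such that no triple $(F,k,\delta)$ works. Fix an increasing sequence of finite-dimensional subsystems $E \subseteq F_1 \subseteq F_2 \subseteq \cdots$ whose union $\cS_\infty := \bigcup_n F_n$ is a countably generated (separable) subsystem of $\cS$. Since the LLP passes to subsystems (restriction of a ucp map to a subsystem is ucp), $\cS_\infty$ still has the LLP; alternatively we can work with a net indexed by all finite-dimensional $F \supseteq E$ and avoid separability altogether. Failure of CP-stability for the triple $(F_n, n, 1/n)$ gives a unital linear map $\varphi_n: F_n \to \cA_n$ into a C*-algebra with $\|\varphi_n^{(n)}\| < 1 + 1/n$, yet $\|\psi - \varphi_n|_E\| \geq \epsilon$ for every ucp $\psi: E \to \cA_n$.

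Set $\cA := \prod_n \cA_n$ (the $\ell^\infty$-product), $I := \bigoplus_n \cA_n$ (the $c_0$-sum), or an ultrapower quotient along a free ultrafilter $\mathcal U$. Define $\Phi: \cS_\infty \to \cA/I$ by $\Phi(x) = q\big((\varphi_n(x))_n\big)$, where $\varphi_n(x)$ is interpreted as $0$ for the finitely many $n$ with $x \notin F_n$. The main check is that $\Phi$ is ucp. For fixed $k$ and $X \in M_k(F_m)$, we have $\|\varphi_n^{(k)}(X)\| \le (1+1/n)\|X\|$ for all $n \ge \max(k,m)$, so $\Phi^{(k)}$ is contractive modulo $I$, and $\Phi$ is unital. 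A unital complete contraction is ucp, so $\Phi$ is ucp. The norms are well-defined because each $\varphi_n$ is bounded on $F_n$; one should note that $\|\varphi_n\| \le 1+1/n$ uniformly.

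By the LLP applied to the finite-dimensional subsystem $E \subseteq \cS_\infty$, there is a ucp lift $\widetilde\Phi: E \to \cA$ with $q\circ\widetilde\Phi = \Phi|_E$. Its coordinates $\psi_n := \pi_n\circ\widetilde\Phi: E \to \cA_n$ are ucp. Since $q(\widetilde\Phi(x) - (\varphi_n(x))_n) = 0$, we have $\|\psi_n(x) - \varphi_n(x)\| \to 0$ for each $x \in E$ (or along $\mathcal U$). Because $E$ is finite-dimensional, pointwise convergence on a basis upgrades to $\|\psi_n - \varphi_n|_E\| \to 0$. Hence $\|\psi_n - \varphi_n|_E\| < \epsilon$ for some $n$, which is a contradiction.

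The main obstacle is making $\Phi$ genuinely well-defined and completely positive: the maps $\varphi_n$ live on varying domains and are only approximately contractive at level $n$. The key steps are that $k$ is fixed while $n \to \infty$, and that unital complete contractions are completely positive. If the paper's definition of the LLP requires $\varphi$ to be defined on all of $\cS$, we would first extend $\Phi$ from $\cS_\infty$ to $\cS$; this is not possible in general, which is why we should instead index by the net of all finite-dimensional $F \supseteq E$ and use an ultraproduct over that directed set, so that $\Phi$ is defined on all of $\cS$ directly.
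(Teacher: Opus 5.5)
Your net version, stated at the end, is exactly the paper's argument and is correct: an ultraproduct over the directed set of all finite-dimensional $F$ with $E\subseteq F\subseteq\cS$ (so the limit map $\Phi$ is defined on all of $\cS$), followed by an LLP lift of $\Phi|_E$ into the product algebra and reading off coordinates. You let the target $\cA_F$ vary with the triple and pass to $\prod_F\cA_F$. That is slightly more careful than the paper, which writes a single $\cA$ even though the negation of CP-stability only gives an algebra depending on the triple.

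The sequence version in your main text should be dropped, because its key step is unjustified. You claim $\cS_\infty=\bigcup_n F_n$ inherits the LLP ``because restriction of a ucp map to a subsystem is ucp,'' but this runs in the wrong direction. To lift a ucp map $\Phi:\cS_\infty\to\cA/I$ using the LLP of $\cS$, you would first have to extend $\Phi$ to $\cS$. Ucp maps into the non-injective quotient $\cA/I$ do not extend in general, which is exactly the objection you raise yourself in your last paragraph. In tensor terms, $\cS_\infty\otimes_{max}B(H)\to\cS\otimes_{max}B(H)$ need not be an order embedding, so $(min,er)$-nuclearity does not obviously pass to subsystems.

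If $\cS$ is separable, you can rescue the sequence argument. Choose the $F_n$ with dense union and extend the unital complete contraction $\Phi$ to $\cS$ by continuity; the extension is still ucp. In general, though, the net is needed.

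When you pass to the net, you must also attach to each index a triple with $k\to\infty$ and $\delta\to0$ along the ultrafilter. The paper uses $(F,\dim F,1/\dim F)$, which only works when $\dim\cS=\infty$, and handles the finite-dimensional case separately. Indexing instead by pairs $(F,n)$ with the product order and the triple $(F,n,1/n)$ covers both cases at once.
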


\begin{proof}
    We only prove the infinite-dimensional case. The finite-dimensional case can be proven with a similar argument and is simpler. 
    
    For the sake of contradiction, suppose that $\cS$ is not CP-stable. Then there exists $\epsilon_0 > 0$, a finite-dimensional subsystem $E \subseteq \cS$, and a C*-algebra $\cA$ such that for every finite-dimensional operator system $F$ with $E \subseteq F \subseteq \cS$ and $n \in \mathbb{N}$, there exists a unital self-adjoint linear map $\phi_F: F \to \cA$ such that $\|\phi_F^{(n)}\| < 1 + \frac{1}{n}$ and $\| \phi_F|_{E} - \psi\| \geq \epsilon_0$ for every ucp map $\psi: E \to \cA$. In particular, we may take $n = \dim(F)$. Let $\{F_{\lambda}\}_{\lambda \in \Lambda}$ be the net of all finite-dimensional operator systems such that $E \subseteq F_{\lambda} \subseteq \cS$ ordered by inclusion and let $\varphi_{\lambda}: F_{\lambda} \to \cA$ be the corresponding maps. Let $\omega$ be a non-principle ultrafilter cofinal on $\Lambda$ and consider
    \[ \varphi: \cS_0 \to \mathcal{A}^{\omega} := (\prod_{\lambda \in \Lambda} \cA) / J_{\omega} \]
    defined by $\varphi(x) = \lim \limits_{\lambda \to \omega} \phi_{\lambda}(x)$ for $x \in \cS := \bigcup_{\lambda \in \Lambda} F_{\lambda}$, where $J_{\omega}$ denotes all nets $(a_{\lambda})$ with $\lim \limits_{\lambda \to \omega} \|a_{\lambda}\| = 0$. Then $\varphi$ is completely contractive since $\|\varphi^{(k)}\| = \lim \limits_{\lambda \to \omega} \|\phi_{\lambda}^{(k)}\| = 1$ for every $k$. It follows that $\varphi$ is completely positive. Since $\cS$ has the local lifting property, the restriction of $\varphi$ to $E$ admits a ucp lift $\psi: E \to \prod_{\lambda \in \Lambda} \cA$. Compressing $\psi$ to terms of $\prod_{\lambda \in \Lambda} \cA$, we obtain a net of maps ucp $\psi_{\lambda}: E \to \cA$ such that $\|\psi_{\lambda} - \phi_{\lambda} \| \to 0$ as $\lambda \to \omega$. This contradicts the assumption that $\|\psi - \phi_{\lambda}\| \geq \epsilon_0$ for every ucp map $\psi$. Therefore $\cS$ is CP-stable.
\end{proof}

We say that an operator system $\cS$ has the \textbf{maximal approximation property} provided that whenever $E \subseteq \cS$ is a finite-dimensional subsystem, the inclusion $i: E \to \cS$ approximately factors through finite-dimensional $k$-maximal operator systems, i.e. there exists a net of integers $k_{\lambda}$, $k_{\lambda}$-maximal and finite-dimensional operator systems $F_{\lambda}$, and ucp maps $\varphi_{\lambda}: E \to F_{\lambda}$ and $\psi_{\lambda}: F \to \cS$ such that the composition $\psi_{\lambda} \circ \varphi_{\lambda}$ converges to the inclusion $i: E \to \cS$ point-norm.

\begin{figure}[h!]
\centering
    \begin{tikzcd}
    & F_{\lambda}^{k_{\lambda}-max} \arrow[rd, "\psi_{\lambda}" black] \\
    E \arrow[rr, "i"] \arrow[ru, "\varphi_{\lambda}"]  & & \cS
    \end{tikzcd}
\caption{The maximal approximation property.}
\end{figure}

\begin{theorem}
    An operator system $\cS$ has the maximal AP if and only if it is (min,er)-nuclear.
\end{theorem}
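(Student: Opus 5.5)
I would prove the two directions separately, mirroring the proof of Theorem \ref{thm: minimal CPAP}. For the direction that the maximal AP implies (min,er)-nuclearity, I would use right injectivity of min and er (equivalently, left injectivity after the flip $\cT \otimes_{er} \cS \cong \cS \otimes_{el} \cT$ is replaced by the appropriate version). This gives that an element of $M_n(\cT \otimes_{er} \cS)$ whose $\cS$-legs lie in a finite-dimensional $E$ is positive iff it is positive in $\cT \otimes_{\min} \cS$. So it suffices to show that $id \otimes i : \cT \otimes_{\min} E \to \cT \otimes_{er} \cS$ is ucp for every finite-dimensional $E\subseteq \cS$.

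Here I need a known fact from \cite{Kavruk}: finite-dimensional $k$-maximal systems $F$ are (min,er)-nuclear. This should follow from the duality $(F^{k-max})^d=(F^d)^{k-min}$, the (min,el)-nuclearity of $k$-minimal systems, and Theorem 6.6 of \cite{Kavruk}, as in Corollary \ref{Cor: max CPAP}. Then the factorization
\[ \cT\otimes_{\min} E \xrightarrow{id\otimes\varphi_\lambda} \cT\otimes_{\min} F_\lambda = \cT\otimes_{er} F_\lambda \xrightarrow{id\otimes \psi_\lambda} \cT\otimes_{er}\cS \]
is ucp by functoriality. It converges pointwise to $id\otimes i$, and the cones of an operator system tensor product are closed in the order topology on finite-dimensional pieces, so $id\otimes i$ is ucp.

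For the converse, assume $\cS$ is (min,er)-nuclear, hence has the LLP and so is CP-stable by the preceding Proposition. Fix a finite-dimensional $E\subseteq\cS$ and $\epsilon>0$, and take the triple $(F,k,\delta)$ given by CP-stability. I would set $F_\lambda := F^{k-max}$, the $k$-maximal operator system structure on the vector space $F$; it is finite-dimensional and $k$-maximal. I would take $\varphi_\lambda : E\to F^{k-max}$ to be the inclusion. The key question is whether this inclusion is ucp, and it need not be, since $M_n(F^{k-max})^+ \subseteq M_n(F)^+$ is the smaller cone. So instead I would use the identity $F^{k-max}\to F$ in the other direction, which is ucp, and place the error on the $\varphi$ side.

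Concretely, the identity $id : F \to F^{k-max}$ is unital and $k$-positive, with $\|id^{(k)}\|=1$, because $M_k(F^{k-max})^+=M_k(F)^+$. Embed $F^{k-max}$ into a C*-algebra $\cA$, say $C^*_u(F^{k-max})$, so this becomes a unital map $F\to\cA$ with $\|\cdot^{(k)}\|<1+\delta$. CP-stability then yields a ucp map $\varphi: E\to\cA$ with $\|\varphi - i_E\|<\epsilon$, where $i_E$ denotes $E\subseteq F^{k-max}\subseteq \cA$.

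The main obstacle is that $\varphi$ lands in $\cA$ rather than in the finite-dimensional $F^{k-max}$. I would fix this with Lemma \ref{lem: inverse approx cp}-style perturbation inside $\cA$: $\varphi$ approximates the identity of $E\subseteq\cA$. Applying the Lemma to $\varphi$ gives a cp correction $\rho=\varphi^{-1}+\eta I$ from $\varphi(E)$ back to $E\subseteq F^{k-max}$, with $\eta(I)\le C\epsilon$. I would then try to realise the map $E\to F^{k-max}$ as $x\mapsto (1+\eta(I))^{-1}(x+\eta(\varphi(x))I)$, and check complete positivity of this map into $F^{k-max}$. This requires positivity in $\cA$ of elements of $M_n(F^{k-max})$, and complete order embedding makes that sufficient. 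If this step fails, my fallback is to enlarge $F_\lambda$ to a finite-dimensional $k$-maximal system $(\mathrm{span}(F\cup\varphi(E)))^{k-max}$. Then $\psi_\lambda$ is the identity into $\cS$, with ucp-ness justified by $k$-positivity estimates plus a small unit perturbation, and I would take a net over $(E,\epsilon)$.
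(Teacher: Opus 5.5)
\textbf{The gap: your first direction puts $\cS$ in the wrong slot.} By definition, (min,er)-nuclearity of $\cS$ means $\cS\otimes_{\min}\cT=\cS\otimes_{er}\cT$, where $\cS\otimes_{er}\cT\subseteq\cS\otimes_{max}I(\cT)$. You place $\cS$ in the right slot of $er$ so that you can use injectivity of $er$ there. But your own flip $\cT\otimes_{er}\cS\cong\cS\otimes_{el}\cT$ shows that the statement $\cT\otimes_{\min}\cS=\cT\otimes_{er}\cS$ for all $\cT$ is (min,el)-nuclearity, i.e.\ exactness of $\cS$.

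Likewise, your middle equality $\cT\otimes_{\min}F_\lambda=\cT\otimes_{er}F_\lambda$ asserts that $F_\lambda$ is exact. Kavruk's duality does not give this; it gives $F_\lambda\otimes_{\min}\cT=F_\lambda\otimes_{er}\cT$. Your equality is also false in general. If it held for all finite-dimensional $k$-maximal systems, your chain together with the (correct) converse direction would show that the LLP implies exactness. That fails for $C^*(\mathbb{F}_2)$, which has the LLP but is not exact.

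The repair is to keep $\cS$ on the left. Take $x\in M_n(\cS\otimes_{\min}\cT)^+$ with $\cS$-legs in $E$; left injectivity of min puts $x$ in $M_n(E\otimes_{\min}\cT)^+$. Then the chain
\[ E\otimes_{\min}\cT\longrightarrow F_\lambda\otimes_{\min}\cT=F_\lambda\otimes_{er}\cT\longrightarrow\cS\otimes_{er}\cT \]
is ucp using only functoriality of $er$. Here $er$ is not left injective, but no injectivity is needed on the target side. Your Archimedean limit argument then finishes. Corrected this way, your route tests every $\cT$ directly against $er$. The paper instead tests only $\cT=B(H)$ against max, and uses Theorem 8.5 of \cite{KPTT2} at both ends: for $F_\lambda$ via Corollary \ref{Cor: max CPAP}, and for $\cS$. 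Both arguments rest on the same input, namely that finite-dimensional $k$-maximal systems are (min,er)-nuclear.

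\textbf{Your second direction is the paper's proof.} You apply CP-stability to the identity $F\to F^{k\text{-max}}\subseteq\cA$. You then use Lemma \ref{lem: inverse approx cp} to pull the resulting ucp map $E\to\cA$ back into $F^{k\text{-max}}$; as you note, complete positivity into $F^{k\text{-max}}$ follows from the complete order embedding into $\cA$. Finally you compose with the ucp identity $F^{k\text{-max}}\to F\subseteq\cS$. Your fallback is unnecessary, and would not make sense anyway, since $\varphi(E)$ lies in $\cA$ rather than in $\cS$.

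One point you share with the paper needs care. Equality of the level-$k$ cones does not give $\|id^{(k)}\|=1$ for $id:F\to F^{k\text{-max}}$, because norms on $M_k$ are governed by the cone in $M_{2k}$. For example, take $F=\mathrm{span}\{1,z,\bar z\}\subseteq C(\mathbb{T})$ and $k=1$: the identity $F\to F^{1\text{-max}}$ has norm $2$, since $F$ admits unital positive maps of norm $2$. Using $F^{2k\text{-max}}$ instead fixes this, because unital $2k$-positive maps are $k$-contractive.
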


\begin{proof} First suppose that $\cS$ has the maximal AP. Let $x \in \cS \otimes_{min} B(H)^+$. We will show that $x \in \cS \otimes_{max} B(H)^+$. Since $x$ is an element of the algebraic tensor product, there is some finite-dimensional subsystem $E \subseteq \cS$ such that $x \in E \otimes B(H)$. By injectivity of the min tensor product, $x \in E \otimes_{min} B(H)$. Consider the diagram
    \[
    \begin{tikzcd}
        & F_{\lambda} \otimes_{\min} B(H) = F_{\lambda} \otimes_{max} B(H) \arrow[rd, "\psi_{\lambda} \otimes id" black] \\
        E \otimes_{min} B(H) \arrow[rr] \arrow[ru, "\varphi_{\lambda} \otimes id"]  & & \cS \otimes_{max} B(H)
        \end{tikzcd}
    \]
Each map is completely positive, and the equality $F_{\lambda} \otimes_{\min} B(H) = F_{\lambda} \otimes_{max} B(H)$ holds by Corollary \ref{Cor: max CPAP} together with Theorem 8.5 of \cite{KPTT2}. It follows that $x \in \cS \otimes_{max} B(H)^+$. Similar arguments show that $M_n(\cS \otimes_{min} B(H))^+ = M_n(\cS \otimes_{max} B(H))^+$. So by Theorem 8.5 of \cite{KPTT2} again, $\cS$ has LLP and hence is (min,er)-nuclear.

Now suppose $\cS$ is (min,er)-nuclear. Then $\cS$ is CP stable. Let $E \subseteq \cS$ be a finite-dimensional subsystem. Let $0 < \epsilon < 1/4$ be given and let $(F=F_{\epsilon},k,\delta)$ be the corresponding triple (from CP-stability). Consider the identity map $\rho: F \to F^{k-max} \subseteq \cA$ from $F$ to $F^{k-max}$ regarded as a subsystem of some C*-cover $\cA$. The map $\rho$ is unital and linear and satisfies $\|\rho^{(k)}\| = 1 < 1+\delta$. By CP-stability, there exists a ucp map $\sigma: E \to \cA$ such that $\|\sigma - \rho|_E\| < \epsilon$. 

Since $\rho$ is isometric, $\rho(E)$ is isometrically isomorphic $E$ regarded as a subsystem of $F^{k-max}$. However, the image of $E$ under $\sigma$ may differ from $\rho(E)$. To obtain maps that factor through $F_{\epsilon}^{k-max} \subseteq \cA$, we introduce another map to perturb the range of $\sigma$. By applying Lemma \ref{lem: inverse approx cp} to the mapping $\rho(x) \mapsto \sigma(x)$ from $\rho(E)$ to $\sigma(E)$, there exists a positive linear functional $\eta: \sigma(E) \to \mathbb{C}$ with $\eta(I) \leq C \epsilon$ (where $C$ depends only on the Banach space $E$) such that $\pi + \eta \otimes I$ is completely positive, where $\pi: \sigma(E) \to \rho(E)$ is defined by $\pi: \sigma(x) \mapsto \rho(x)$ for $x \in E$. Renormalizing, we obtain a map $\varphi_{\epsilon}: E \to F_{\epsilon}^{k-max}$ given by
$\varphi_{\epsilon} = (1 + \eta(I))^{-1}(\pi + \eta \otimes I) \circ \sigma = (1 + \eta(I))^{-1}(\rho + (\eta \circ \sigma) \otimes I)$. Since $\pi + \eta \otimes I$ is completely positive and $\sigma$ is ucp, $\varphi_{\epsilon}$ is ucp. Let $\psi_{\epsilon}: F_{\epsilon}^{k-max} \to \cS$ be the inclusion map. Since $k$-positive maps on $k$-maximal systems are completely positive \cite{Xhabli}, $\psi_{\epsilon}$ is ucp. Finally, the composition $\psi_{\epsilon} \circ \varphi_{\epsilon}$ converges to the inclusion map as $\epsilon \to 0$. We conclude that $\cS$ has the maximal AP. \end{proof}

\subsection{(min,c)-nuclearity}

An operator system has the \textbf{universal cover approximation property} (UCAP) if the inclusion $j: \cS \to C^*_u(\cS)$ approximately factors through matrix algebras --- i.e. there exists a net $\Lambda$ and ucp maps $\varphi_{\lambda}: \cS \to M_{n_{\lambda}}$ and $\psi_{\lambda}: M_{n_{\lambda}} \to C^*_u(\cS)$ such that for every $x \in \cS$, $\psi_{\lambda} \circ \varphi_{\lambda} (x) \to j(x)$.

\begin{figure}[h!]
\centering
    \begin{tikzcd}
    & M_{n_{\lambda}} \arrow[rd, "\psi_{\lambda}" black] \\
    \mathcal{S} \arrow[rr, "j"] \arrow[ru, "\varphi_{\lambda}"]  & & C^*_u(\cS)
    \end{tikzcd}
\caption{The universal cover approximation property.}
\end{figure}

\begin{theorem} \label{t:min-c-nuc}
    Let $\cS$ be an operator system. The the following statements are equivalent.
    \begin{enumerate}
        \item $\cS$ is (min,c)-nuclear.
        \item $\cS$ is C*-nuclear.
        \item $\cS$ has the UCAP.
    \end{enumerate}
\end{theorem}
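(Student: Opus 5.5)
The plan is to mirror the proof of Theorem \ref{t:min-el=injcpap}, with the universal C*-cover $C^*_u(\cS)$ playing the role of the injective envelope. Here I take ``C*-nuclear'' in the sense of \cite{KPTT2}: $\cS \otimes_{min} \cA = \cS \otimes_{max} \cA$ for every unital C*-algebra $\cA$.

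First, (1)$\Leftrightarrow$(2). Since $\cS \otimes_c \cA = \cS \otimes_{max} \cA$ for every C*-algebra $\cA$, (min,c)-nuclearity immediately gives C*-nuclearity. Conversely, let $\cT$ be an arbitrary operator system. We have the complete order inclusion $\cS \otimes_c \cT \subseteq \cS \otimes_{max} C^*_u(\cT)$. By injectivity of the minimal tensor product, $\cS \otimes_{min} \cT \subseteq \cS \otimes_{min} C^*_u(\cT)$ is also a complete order embedding. C*-nuclearity identifies $\cS \otimes_{min} C^*_u(\cT)$ with $\cS \otimes_{max} C^*_u(\cT)$. Restricting this identification to $\cS \otimes \cT$ gives $\cS \otimes_{min} \cT = \cS \otimes_c \cT$. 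I would record this step as a short remark, or cite \cite{KPTT2} if the equivalence is already stated there.

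Next, (1)$\Leftrightarrow$(3) via Theorem \ref{thm: Han-Paulsen}. Let $j: \cS \to C^*_u(\cS)$ be the inclusion. The key identity is $\cS \otimes_c \cR \subseteq C^*_u(\cS) \otimes_{max} \cR$ completely order isomorphically, for every operator system $\cR$. Hence the identity map $\cS \otimes_{min} \cR \to \cS \otimes_c \cR$ is ucp if and only if $j \otimes id: \cS \otimes_{min} \cR \to C^*_u(\cS) \otimes_{max} \cR$ is ucp. By symmetry of min and max, this is the same as $id \otimes j: \cR \otimes_{min} \cS \to \cR \otimes_{max} C^*_u(\cS)$ being ucp. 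Therefore $\cS$ is (min,c)-nuclear exactly when condition (1) of Theorem \ref{thm: Han-Paulsen} holds for $\phi = j$. That condition is equivalent to condition (3) of the same theorem, which is precisely the UCAP.

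The main thing to get right is the direction of the equivalence in the middle step. If $\cS \otimes_{min} \cR = \cS \otimes_c \cR$, then $j\otimes id$ is the composition of the identity map with the inclusion, so it is ucp. Conversely, if $j \otimes id$ is ucp, then positivity of an element in $M_n(\cS \otimes_{min} \cR)$ forces positivity of its image in $C^*_u(\cS)\otimes_{max}\cR$. Because the inclusion is a complete order embedding, that image is positive in $\cS\otimes_c\cR$. Since $c \succeq min$ holds automatically, this gives equality of the two structures.

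I expect no serious obstacle. The only nontrivial input is the embedding $\cS\otimes_c\cR\subseteq C^*_u(\cS)\otimes_{max}\cR$ from \cite{KPTT2}, already recalled in the preliminaries.
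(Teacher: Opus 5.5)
Your proposal is correct and follows the paper's argument. The equivalence (1)$\Leftrightarrow$(3) is obtained exactly as in the paper, by applying Theorem \ref{thm: Han-Paulsen} to the inclusion $j:\cS\to C^*_u(\cS)$ and using that $\cS\otimes_c\cR$ sits completely order isomorphically inside $C^*_u(\cS)\otimes_{max}\cR$ (the paper writes it with $\cR$ on the left). The only difference is that the paper simply cites \cite{KPTT1} for (1)$\Leftrightarrow$(2), whereas you supply the short standard argument via $\cS\otimes_c\cT\subseteq\cS\otimes_{max}C^*_u(\cT)$ and injectivity of the minimal tensor product, which is valid.
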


\begin{proof}
    The equivalence of (1) and (2) follows from \cite{KPTT1}. To see that (1) and (3) are equivalent, Let $\cT = C^*_u(\cS)$ and let $\varphi: \cS \to \cT$ denote the canonical inclusion. By Theorem \ref{thm: Han-Paulsen}, $id \otimes \varphi: R \otimes_{min} \cS \to R \otimes_{max} \cT$ is ucp for every operator system $R$ if and only if (3) holds. But the image $id \otimes \varphi( R \otimes_{min} \cS) \subseteq R \otimes_{max} \cT$ is exactly $R \otimes_c \cS$. It follows that (1) and (3) are equivalent.
\end{proof}

We will also consider the following approximate factorization property, which we will see is equivalent. We say that an operator system $\mathcal{S}$ has the \textit{C*-cover approximation property} if and only if for every ucp map $\pi: \cS \to B(H)$ there exists a net $\Lambda$ and ucp maps $\varphi_{\lambda}: \cS \to M_{n_{\lambda}}$ and $\psi_{\lambda}: M_{n_{\lambda}} \to C^*(\pi(\cS))$ such that for every $x \in \cS$, $\psi_{\lambda} \circ \varphi_{\lambda} (x) \to \pi(x)$.

\begin{figure}[h!]
\centering
    \begin{tikzcd}
    & M_{n_{\lambda}} \arrow[rd, "\psi_{\lambda}" black] \\
    \mathcal{S} \arrow[rr, "\pi"] \arrow[ru, "\varphi_{\lambda}"]  & & C^*(\pi(\cS))
    \end{tikzcd}
\caption{The C*-cover approximation property.}
\end{figure}

\begin{lemma}
    Let $\cS$ be an operator system. Then $\cS$ has the UCAP if and only if it has the C*-cover AP.
\end{lemma}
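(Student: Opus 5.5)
The plan is to prove the two implications separately, using the universal property of $C^*_u(\cS)$ in one direction and specialization to a particular ucp map in the other.

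For UCAP $\Rightarrow$ C*-cover AP, fix a ucp map $\pi: \cS \to B(H)$. By the universal property of $C^*_u(\cS)$, $\pi$ extends to a unital $*$-homomorphism $\pi_u: C^*_u(\cS) \to B(H)$ with $\pi_u \circ j = \pi$. Its range is the C*-algebra generated by $\pi(\cS)$, since $C^*_u(\cS)$ is generated by $j(\cS)$ and $\pi_u$ is a $*$-homomorphism with closed range. So we may regard $\pi_u$ as a surjective $*$-homomorphism onto $C^*(\pi(\cS))$.

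Now take the UCAP net $\varphi_\lambda: \cS \to M_{n_\lambda}$ and $\psi_\lambda: M_{n_\lambda} \to C^*_u(\cS)$. Set $\psi'_\lambda := \pi_u \circ \psi_\lambda$, which is ucp into $C^*(\pi(\cS))$. Since $\pi_u$ is contractive,
\[ \|\psi'_\lambda \circ \varphi_\lambda(x) - \pi(x)\| = \|\pi_u(\psi_\lambda\circ\varphi_\lambda(x) - j(x))\| \le \|\psi_\lambda\circ\varphi_\lambda(x) - j(x)\| \to 0 \]
for every $x \in \cS$. This gives the C*-cover AP.

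For the converse, realize $C^*_u(\cS)$ faithfully as a concrete C*-algebra $C^*_u(\cS) \subseteq B(H)$, and let $\pi := j: \cS \to B(H)$. This is ucp, indeed a unital complete order embedding. Then $C^*(\pi(\cS)) = C^*(j(\cS)) = C^*_u(\cS)$, because $j(\cS)$ generates $C^*_u(\cS)$. Applying the C*-cover AP to this particular $\pi$ produces ucp maps $\varphi_\lambda: \cS \to M_{n_\lambda}$ and $\psi_\lambda: M_{n_\lambda} \to C^*_u(\cS)$ with $\psi_\lambda\circ\varphi_\lambda \to j$ point-norm, which is exactly the UCAP.

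There is no real obstacle here. The one point to check is that the range of $\pi_u$ is precisely $C^*(\pi(\cS))$, so that the maps $\psi'_\lambda$ land in the prescribed codomain. This holds because $*$-homomorphisms have closed range and map generators to generators.
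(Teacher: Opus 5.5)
Your proof is correct and matches the paper's argument: you compose the UCAP maps $\psi_\lambda$ with the $*$-homomorphism $C^*_u(\cS) \to C^*(\pi(\cS))$ given by the universal property, and for the converse you specialize the C*-cover AP to $\pi = j$. Your remarks that the range of $\pi_u$ is exactly $C^*(\pi(\cS))$ and that $\pi_u$ is contractive just make explicit details the paper leaves implicit.
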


\begin{proof}
    Suppose that $\cS$ has the UCAP. Then there exists a net $\Lambda$ and ucp maps $\varphi_{\lambda}: \cS \to M_{n_{\lambda}}$ and $\psi_{\lambda}: M_{n_{\lambda}} \to C^*_u(\cS)$ such that for every $x \in \cS$, $\psi_{\lambda} \circ \varphi_{\lambda} (x) \to j(x)$. Let $\pi: \cS \to B(H)$ be ucp. By the universal property of $C^*_u(\cS)$, there exists a $*$-homomorphism $\rho: C^*_u(\cS) \to C^*(\pi(\cS))$ such that $\rho \circ j = \pi$. Hence we have a net of maps $\varphi_{\lambda}: \cS \to M_{n_{\lambda}}$ and $\psi_{\lambda}' = \rho \circ \psi_{\lambda}: M_{n_{\lambda}} \to C^*(\pi(\cS))$ such that for every $x \in \cS$, $\psi_{\lambda}' \circ \varphi_{\lambda}(x) \to \pi(x)$. The other direction is trivial, since the statement must be true for $\pi = j: \cS \to C^*_u(\cS) \subseteq B(H)$.
\end{proof}

Let $G = (V,E)$ be a finite undirected graph with self-loops at each vertex. Writing $V = \{1,2,\dots,n\}$ for the set of vertices of $G$ and $E \subseteq V \times V$ for the set of vertices, we associate to $G$ an operator syste $\cS_G \subseteq M_n$ defined by \[ \cS_G := \text{span} \{ E_{ij} : (i,j) \in E\} \] where $E_{ij}$ denotes the $ij$ matrix unit in $M_n$. We refer to any operator system of this form as a \textit{graph operator system}.

Tensor properties of graph operator systems were considered in \cite{KPTT1}. It was shown in that whenever $G$ is a chordal graph (i.e. every cycle of length 4 or greater has a chord), then $\cS_G$ is (min,c)-nuclear. The proof relied on a matrix completion result from \cite{PaulsenRodman} which applies only to chordal graphs. Using the UCAP, we show that such matrix completions characterize all (min,c)-nuclear graph operator systems.

\begin{theorem} \label{thm: graph system min-c}
    Let $G$ be a finite undirected graph with self-loops at each vertex and $\cS_G$ its graph operator system. Then $\cS_G$ is (min,c)-nuclear if and only if for every C*-algebra $\cA$ and ucp map $\varphi: \cS_G \to A$ and every $\epsilon > 0$, there exists a matrix $(a_{ij}) \in M_n(A)$ such that $(a_{ij}) + \epsilon I_n \in M_n(A)^+$ and $\varphi(E_{ij}) = a_{ij}$ for all $(i,j) \in E$. In other words, $\cS_G$ is (min,c)-nuclear if and only if the partially-defined matrix $(\varphi(E_{ij}))_{(i,j) \in E} + \epsilon I_n$ has a positive completion in $\cA$ for every $\epsilon > 0$.
\end{theorem}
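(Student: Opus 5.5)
We will use Theorem \ref{t:min-c-nuc} together with the C*-cover AP lemma: $\cS_G$ is (min,c)-nuclear if and only if for every ucp $\varphi: \cS_G \to \cA$ the map $\varphi$, viewed as a map into $C^*(\varphi(\cS_G)) \subseteq \cA$, is a point-norm limit of ucp maps factoring through matrix algebras. Since $\dim \cS_G < \infty$, the net may be taken to be a sequence, and convergence is uniform on the unit ball. The bridge between this and matrix completions is the canonical element
\[ X = \sum_{(i,j)\in E} E_{ij}\otimes E_{ij} \in M_n(\cS_G), \]
which is positive: it is a compression of $\sum_{i,j} E_{ij}\otimes E_{ij} \ge 0$ in $M_n(M_n)$ to the entries lying in $\cS_G$. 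Its images record the partial matrices $(\varphi(E_{ij}))_{(i,j)\in E}$.

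\emph{Forward direction.} Fix $\varphi$ and $\epsilon$, and take ucp maps $\phi_\lambda: \cS_G\to M_k$ and $\psi_\lambda: M_k\to \cA$ with $\|\psi_\lambda\phi_\lambda(E_{ij})-\varphi(E_{ij})\|<\delta$ for all $(i,j)\in E$.
\begin{enumerate}
\item Since $M_k$ is injective, Arveson extension gives a ucp extension $\tilde\phi_\lambda: M_n\to M_k$ of $\phi_\lambda$.
\item Set $b_{ij}=\psi_\lambda\tilde\phi_\lambda(E_{ij})$ for all $i,j$. Then $(b_{ij})=(\psi_\lambda\tilde\phi_\lambda)^{(n)}(\sum_{i,j}E_{ij}\otimes E_{ij})\ge0$, and $b_{ij}$ is within $\delta$ of $\varphi(E_{ij})$ on the edges.
\item Let $a_{ij}=\varphi(E_{ij})$ on edges and $a_{ij}=b_{ij}$ off edges. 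Then $(a_{ij})-(b_{ij})$ has norm at most $n\delta$, so $(a_{ij})+n\delta I_n\ge 0$. Choosing $\delta=\epsilon/n$ gives the required completion.
\end{enumerate}

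\emph{Converse.} We check condition (2) of the Han--Paulsen theorem in the form $\cS_G\otimes_{min}\cT = \cS_G\otimes_c \cT$, equivalently positivity in $\cS_G\otimes_{max} C^*_u(\cS_G)$-type targets. Concretely, we aim to show that $j:\cS_G\to C^*_u(\cS_G)$ extends approximately to a ucp map on $M_n$. This suffices: $M_n$ is nuclear, so such an extension gives a factorization $\cS_G\subseteq M_n\to C^*_u(\cS_G)$, which is the UCAP.
\begin{enumerate}
\item Apply the hypothesis to $\varphi=j$, $\cA=C^*_u(\cS_G)$, obtaining $(a_{ij})+\epsilon I_n\ge0$ with $a_{ij}=j(E_{ij})$ on edges.
\item Set $P=((a_{ij})+\epsilon I_n)/(1+\epsilon)$; this is positive in $M_n(\cA)$. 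Define $\Psi: M_n\to\cA$ by $\Psi(E_{ij}) = P_{ij}$, using the Choi-matrix correspondence. Then $\Psi$ is completely positive: a positive $P=B^*B$ with $B$ having columns $b_j$ yields $\Psi(T)=\sum_{i,j} t_{ij}P_{ij}$, which is a compression of $T\otimes 1$ (after transposition conventions).
\item $\Psi$ is unital up to correction. The diagonal entries satisfy $a_{ii}=j(E_{ii})$ because self-loops lie in $E$, so $\Psi(I)=(\sum_i j(E_{ii})+n\epsilon)/(1+\epsilon)$. Since $\sum_i j(E_{ii})=1$, we get $\Psi(I)=(1+n\epsilon)/(1+\epsilon)$, a scalar, and rescaling makes $\Psi$ ucp.
\item On edges, $\Psi(E_{ij})$ differs from $j(E_{ij})$ by $O(\epsilon)$, so $\Psi|_{\cS_G}\to j$ in norm as $\epsilon\to0$.
\end{enumerate}
Taking $\varphi_\lambda$ to be the inclusion $\cS_G\subseteq M_n$ and $\psi_\lambda=\Psi_\epsilon$ gives the UCAP, and Theorem \ref{t:min-c-nuc} then yields (min,c)-nuclearity.

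\emph{Main obstacle.} The key step is verifying that $\Psi$ is completely positive. The Choi-type correspondence $\mathrm{CP}(M_n,\cA)\leftrightarrow M_n(\cA)^+$, via $\Psi\mapsto(\Psi(E_{ij}))$, must hold for an arbitrary C*-algebra target. I would handle this by writing $P=B^*B$ with $B\in M_n(\cA)$ and expressing $\Psi(T)=\sum_k B_k^*(T\otimes1)B_k$, where $B_k$ are the appropriate columns, checking the index and transpose conventions carefully.
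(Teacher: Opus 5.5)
Your proposal is correct and follows essentially the same route as the paper. In the forward direction you Arveson-extend the first leg of the C*-cover factorization to $M_n$, read off a positive Choi matrix, and overwrite the edge entries at a cost of $n\delta$; in the converse you turn the completed matrix plus $\epsilon I_n$ into a cp map on $M_n$ via Choi's theorem, normalize using the self-loops, and factor $j$ through the inclusion $\cS_G\subseteq M_n$ to obtain the UCAP. The differences are cosmetic: you apply the hypothesis only to $j:\cS_G\to C^*_u(\cS_G)$ rather than to every ucp map, and your opening remarks about Han--Paulsen condition (2) and the nuclearity of $M_n$ are not needed.
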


\begin{proof}
    Suppose $\cS_G$ is (min,c)-nuclear. Then for every C*-algebra $\cA$ and ucp map $\pi: \cS_G \to \cA$, there exists a net $\Lambda$ and ucp maps $\varphi_{\lambda}: \cS \to M_{n_{\lambda}}$ and $\psi_{\lambda}: M_{n_{\lambda}} \to \cA$ such that for every $x \in \cS$, $\psi_{\lambda} \circ \varphi_{\lambda} (x) \to j(x)$. Since $M_{n_{\lambda}}$ is injective, we may extend $\varphi_{\lambda}$ to a ucp map $\varphi_{\lambda}': M_n \to M_{n_{\lambda}}$.

        \[
        \begin{tikzcd}
        M_n \arrow[r, "\varphi_{\lambda}'"] & M_{n_{\lambda}} \arrow[rd, "\psi_{\lambda}" black] \\
        \cS_G \arrow[u, hook] \arrow[rr, "\pi"] \arrow[ru, "\varphi_{\lambda}"]  & & A
        \end{tikzcd}
        \]

    \noindent Since $\pi_{\lambda} = \psi_{\lambda} \circ \varphi_{\lambda}': M_n \to \cA$ is ucp, the matrix $\pi_{\lambda}(E_{ij})$ is a positive matrix in $M_n(\cA)$, and $\pi_{\lambda}(E_{ij}) \to \pi(E_{ij})$ for each $(i,j) \in E$. For each $\lambda$, define $a_{\lambda} = (a_{ij}^{\lambda})$ where $a_{ij}^{\lambda} = \pi_{\lambda}(E_{ij})$ for all $(i,j)$. Then for each $\epsilon > 0$, there exists $\lambda$ such that $\|(\pi_{\lambda}(E_{ij})) - a_{\lambda} \| < \epsilon$, implying $(\pi_{\lambda}(E_{ij})) - a_{\lambda} \leq \epsilon I_n$ and hence $0 \leq (\pi_{\lambda}(E_{ij})) \leq \epsilon I_n + a_{\lambda}$. The matrix $a_{\lambda}$ satisfies the required conditions, since $a_{ij}^{\lambda} = \pi(E_{ij})$ when $(i,j) \in G$.

    Conversely, suppose for every C*-algebra $\cA$ and ucp map $\varphi: \cS_G \to A$ and every $\epsilon > 0$, there exists a matrix $(a_{ij}) \in M_n(A)$ such that $(a_{ij}) + \epsilon I_n \in M_n(A)^+$ and $\varphi(E_{ij}) = a_{ij}$ for all $(i,j) \in E$. Let $\pi: \cS_G \to A$ be ucp and let $\epsilon > 0$. Then the map $\pi_{\epsilon}: M_n \to A$ given by $\pi_{\epsilon}(E_{ij}) = a_{ij} + \epsilon \delta_{ij} I$ is ucp, by Choi's Theorem. Since $\pi$ is ucp, $\pi_{\epsilon}(I) = (1+n\epsilon) I$. Hence $\psi_{\epsilon} := (1+n \epsilon)^{-1} \pi_{\epsilon}: M_n \to A$ is ucp. Letting $\varphi: \cS_G \to M_n$ denote the inclusion map, we obtain a sequence of ucp maps such that $\psi_{1/k} \circ \varphi \to \pi$ pointwise as $\epsilon \to 0$. So $\cS_G$ has the UCAP and hence is (min,c)-nuclear.
\end{proof}

\subsection{Weak-$*$ approximate factorization properties}

%\color{blue} Maybe combine this section with the next one. \color{black} 
Let $\cS$ be an operator system. We say $\cS$ has the \textbf{injective double-commutant approximation property} (injective DCAP) if for any unital complete order embedding $\cS \subseteq B(H)$, the inclusion $j: \cS \to \cS'' \subseteq B(H)$ approximately factors through injective operator systems --- i.e. there exists a net of injective operator systems $\{I_{\lambda}\}$ and ucp maps $\varphi_{\lambda}: \cS \to I_{\lambda}$ and $\psi_{\lambda}: I_{\lambda} \to \cS''$ such that for every $x \in \cS$, $\psi_{\lambda} \circ \varphi_{\lambda} (x) \to j(x)$ in the point weak-$*$ topology.

\begin{figure}[h!]
\centering
\begin{tikzcd}
    & I_{\lambda} \arrow[rd, "\psi_{\lambda}" black] \\
    \mathcal{S} \arrow[rr, "j"] \arrow[ru, "\varphi_{\lambda}"]  & & \cS''
    \end{tikzcd}
\caption{The injective double-commutant approximation property}
\end{figure}

\begin{theorem} \label{thm: DCEP iff double commutant CPAP}
    Let $\cS$ be an operator system. The the following statements are equivalent.
    \begin{enumerate}
        \item $\cS$ is (el,c)-nuclear.
        \item $\cS$ has DCEP.
        \item $\cS$ has the injective DCAP.
    \end{enumerate}
\end{theorem}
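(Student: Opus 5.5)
The equivalence of (1) and (2) is already in the literature: it was shown in \cite{KPTT2} that an operator system is (el,c)-nuclear if and only if it has the DCEP. So the plan is to prove (2) $\Leftrightarrow$ (3).

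For (2) $\Rightarrow$ (3), I would take the trivial factorization. Fix a unital complete order embedding $\cS \subseteq B(H)$. By the DCEP, the inclusion $\cS \to \cS''$ extends to a ucp map $\varphi: I(\cS) \to \cS''$. Take the constant net $I_\lambda = I(\cS)$, with $\varphi_\lambda: \cS \to I(\cS)$ the canonical inclusion and $\psi_\lambda = \varphi$. Then $\psi_\lambda \circ \varphi_\lambda = j$ exactly, so the convergence in the point weak-$*$ topology is immediate.

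For (3) $\Rightarrow$ (2), fix $\cS \subseteq B(H)$ and the net $\varphi_\lambda: \cS \to I_\lambda$, $\psi_\lambda: I_\lambda \to \cS''$ from the injective DCAP. I would proceed in three steps.
\begin{enumerate}
\item Since $I_\lambda$ is injective and $\cS \subseteq I(\cS)$ is a complete order embedding, each $\varphi_\lambda$ extends to a ucp map $\tilde\varphi_\lambda: I(\cS) \to I_\lambda$.
\item The compositions $\Phi_\lambda := \psi_\lambda \circ \tilde\varphi_\lambda: I(\cS) \to \cS'' \subseteq B(H)$ form a net in the set of ucp maps from $I(\cS)$ into $B(H)$. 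This set is compact in the point weak-$*$ (BW) topology, so there is a subnet converging to some ucp $\Phi: I(\cS) \to B(H)$.
\item Because $\cS''$ is a von Neumann algebra, it is weak-$*$ closed. Hence $\Phi$ takes values in $\cS''$ and is still ucp. On $\cS$ we have $\Phi_\lambda(x) = \psi_\lambda \circ \varphi_\lambda(x) \to x$ weak-$*$, so $\Phi|_{\cS} = j$.
\end{enumerate}
Thus $\Phi$ is the required extension, and $\cS$ has the DCEP.

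The only point that needs care is the compactness argument in the second step. The BW-compactness of the ucp maps into $B(H)$ (Arveson) applies here, and the fact that $\cS''$ is weak-$*$ closed is exactly what keeps the limit inside $\cS''$. The remaining ingredient, the extension of $\varphi_\lambda$ in the first step, uses only the injectivity of $I_\lambda$. If the DCEP is formulated for arbitrary complete order embeddings $\pi$, the same argument applies verbatim with $\pi(\cS)''$ in place of $\cS''$.
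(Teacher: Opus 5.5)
Your proposal is correct and follows essentially the same route as the paper. You cite \cite{KPTT2} for (1)$\Leftrightarrow$(2) and factor trivially through $I(\mathcal{S})$ for (2)$\Rightarrow$(3); for (3)$\Rightarrow$(2) you extend each $\varphi_\lambda$ to $I(\mathcal{S})$ by injectivity of $I_\lambda$ and take a point weak-$*$ cluster point of $\psi_\lambda\circ\tilde\varphi_\lambda$ inside $\mathcal{S}''$. Your appeal to BW-compactness and to the weak-$*$ closedness of $\mathcal{S}''$ simply spells out what the paper compresses into ``since $\mathcal{S}''$ has a predual.''
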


\begin{proof}
The equivalence of (1) and (2) is proved in \cite{KPTT2}. That (2) implies (3) is clear, since DCEP implies the existence of a ucp map $\pi: I(\cS) \to \cS''$ --- thus, (3) holds for a fixed injective system $I(\cS)$, taking $\varphi: \cS \to I(\cS)$ to be the inclusion and $\pi: I(\cS) \to \cS''$ to be the given ucp map.

To show (3) implies (2), extend each $\varphi_{\lambda}$ to ucp maps $\varphi_{\lambda}': I(\cS) \to I_{\lambda}$ by injectivity of $I_{\lambda}$. Then we have a net of maps $\psi_{\lambda} \circ \varphi_{\lambda}'$ from $I(\cS)$ to $\cS''$. Since $\cS''$ has a predual, there exists a point weak-$*$ limit $\varphi: I(\cS) \to \cS''$ extending the inclusion $j$, so $\cS$ has DCEP.

\[
\begin{tikzcd}
I(\cS) \arrow[r, "\varphi_{\lambda}'"] & I_{\lambda} \arrow[rd, "\psi_{\lambda}"] \\
\mathcal{S} \arrow[u, "i", hook] \arrow[rr, "j"] \arrow[ru, "\varphi_{\lambda}"]  & & \cS''
\end{tikzcd}
\]

\end{proof}

Let $\cS$ be an operator system and let $\cS^{**}$ denote the operator system bidual of $\cS$. We say $\cS$ has the \textbf{injective bidual approxmation property} (injective BAP) if the inclusion $j: \cS \to \cS^{**}$ approximately factors through injective operator systems --- i.e. there exists a net $I_{\lambda}$ of injective operator systems and ucp maps $\varphi_{\lambda}: \cS \to I_{\lambda}$ and $\psi_{\lambda}: I_{\lambda} \to \cS^{**}$ such that for every $x \in \cS$, $\psi_{\lambda} \circ \varphi_{\lambda} (x) \to j(x)$ point weak-$*$.

\begin{figure}[h!]
\centering
    \begin{tikzcd}
    & I_{\lambda} \arrow[rd, "\psi_{\lambda}" black] \\
    \mathcal{S} \arrow[rr, "j"] \arrow[ru, "\varphi_{\lambda}"]  & & \cS^{**}
    \end{tikzcd}
\caption{The injective bidual approximation property.}
\end{figure}

\begin{theorem} \label{t:el-max-nuc}
    Let $\cS$ be an operator system. The the following statements are equivalent.
    \begin{enumerate}
        \item $\cS$ is (el,max)-nuclear.
        \item $\cS$ has WEP.
        \item $\cS$ has the injective BAP.
    \end{enumerate}
\end{theorem}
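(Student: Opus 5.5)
The plan mirrors the proof of Theorem \ref{thm: DCEP iff double commutant CPAP}, with $\cS''$ replaced by the bidual $\cS^{**}$. The equivalence of (1) and (2) is already recorded in the preliminaries: one direction is due to \cite{KPTT2} and the other to \cite{Htp}. So we will simply cite it, and the real content is showing that (2) and (3) are equivalent.

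\emph{(2) implies (3).} Assume $\cS$ has WEP. Then the canonical inclusion $j:\cS\to\cS^{**}$ extends to a ucp map $\varphi: I(\cS)\to\cS^{**}$. We take the constant net consisting of the single injective operator system $I_\lambda = I(\cS)$. For the maps we use $\varphi_\lambda = i:\cS\to I(\cS)$, the inclusion, and $\psi_\lambda=\varphi$. Then $\psi_\lambda\circ\varphi_\lambda = j$ exactly, so the convergence holds trivially in every topology, in particular point weak-$*$.

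\emph{(3) implies (2).} Suppose we have injective $I_\lambda$ and ucp maps $\varphi_\lambda:\cS\to I_\lambda$ and $\psi_\lambda: I_\lambda\to\cS^{**}$ with $\psi_\lambda\circ\varphi_\lambda(x)\to j(x)$ weak-$*$ for each $x\in\cS$. The argument has four steps.
\begin{enumerate}
\item Since $I_\lambda$ is injective and $\cS\subseteq I(\cS)$ is a complete order embedding, each $\varphi_\lambda$ extends to a ucp map $\varphi_\lambda': I(\cS)\to I_\lambda$.
\item The maps $\Phi_\lambda := \psi_\lambda\circ\varphi_\lambda': I(\cS)\to\cS^{**}$ are ucp, hence contractive, so they lie in the unit ball of $B(I(\cS),\cS^{**})$.
\item The bidual $\cS^{**}$ is a dual space, with predual $\cS^*$. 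Hence this ball is compact in the point weak-$*$ topology (BW-topology), by Tychonoff applied to products of weak-$*$ compact balls. So a subnet $\Phi_{\lambda_\mu}$ converges point weak-$*$ to some $\Phi: I(\cS)\to\cS^{**}$.
\item The limit $\Phi$ is unital. It is completely positive because for each $n$ the cone $M_n(\cS^{**})^+$ is weak-$*$ closed, and the positivity conditions pass to point weak-$*$ limits. On $\cS$ we have $\Phi(x)=\lim_\mu \psi_{\lambda_\mu}\varphi_{\lambda_\mu}(x) = j(x)$, because subnets of a convergent net have the same limit. Thus $\Phi$ is a ucp extension of $j$, which is exactly WEP.
\end{enumerate}

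The only point needing care is step 4, specifically the weak-$*$ closedness of the matrix cones of $\cS^{**}$. The justification I would use is the standard identification of $M_n(\cS^{**})$ with $M_n(\cS)^{**}$, whose positive cone is the bipolar (weak-$*$ closure) of $M_n(\cS)^+$. Equivalently, positivity in $M_n(\cS^{**})$ is tested against positive functionals coming from $M_n(\cS)^*$, and such tests are weak-$*$ continuous. With this in hand, complete positivity of the limit map follows at once, and the rest is routine.
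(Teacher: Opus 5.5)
Your proposal is correct and follows the paper's route. The paper cites the literature for (1)$\Leftrightarrow$(2) and says (2)$\Leftrightarrow$(3) goes as in the DCEP theorem: a constant net through $I(\mathcal{S})$ in one direction, and in the other, extension of $\varphi_\lambda$ to $I(\mathcal{S})$ by injectivity of $I_\lambda$ followed by a point weak-$*$ limit in $\mathcal{S}^{**}$. You do exactly this, and in addition you spell out the BW-compactness and the weak-$*$ closedness of the cones $M_n(\mathcal{S}^{**})^+$ that the paper leaves implicit.
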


\begin{proof}
    The implication (2) implies (1) was shown in \cite{KPTT2}, while the implication (1) implies (2) was shown by Han in \cite{Htp}. The proof that (2) is equivalent to (3) is similar to the proof of Theorem \ref{thm: DCEP iff double commutant CPAP}.
\end{proof}

 Again, let $\cS$ be an operator system and let $\cS^{**}$ denote the operator system bidual of $\cS$. We say $\cS$ has the \textbf{C*-bidual approxmation prorty} (C*-BAP) if the inclusion $j: \cS \to \cS^{**}$ approximately factors through C*-algebras --- i.e. there exists a net $A_{\lambda}$ of C*-algebras and ucp maps $\varphi_{\lambda}: \cS \to A_{\lambda}$ and $\psi_{\lambda}: A_{\lambda} \to \cS^{**}$ such that for every $x \in \cS$, $\psi_{\lambda} \circ \varphi_{\lambda} (x) \to j(x)$ point weak-$*$.

 \begin{figure}[h!]
\centering
    \begin{tikzcd}
    & A_{\lambda} \arrow[rd, "\psi_{\lambda}" black] \\
    \mathcal{S} \arrow[rr, "j"] \arrow[ru, "\varphi_{\lambda}"]  & & \cS^{**}
    \end{tikzcd}
\caption{The C*-bidual approximation property.}
\end{figure}

 Since every injective operator system is completely order isomorphic to a C*-algebra, it is clear that WEP implies the C*-CPAP to bidual. This is also seen by the following nuclearity characterization.

 \begin{theorem} \label{t:c-max-nuc}
    Let $\cS$ be an operator system. The the following statements are equivalent.
    \begin{enumerate}
        \item $\cS$ is (c,max)-nuclear.
        \item $\cS$ has the C*-BAP.
    \end{enumerate}
\end{theorem}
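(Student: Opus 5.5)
The plan is to prove the two implications separately, working throughout with the inclusion $\cS \otimes_c \cT \subseteq C^*_u(\cS) \otimes_{max} \cT$ and with the identity $\cA \otimes_c \cT = \cA \otimes_{max} \cT$ for C*-algebras $\cA$ (both recalled in the preliminaries).

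\textbf{(2) implies (1).} Suppose $\varphi_\lambda: \cS \to \cA_\lambda$ and $\psi_\lambda: \cA_\lambda \to \cS^{**}$ realize the C*-BAP. Fix an operator system $\cT$ and $x \in M_n(\cS \otimes_c \cT)^+$.

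By functoriality of $\otimes_c$, $(\varphi_\lambda \otimes id)^{(n)}(x)$ is positive in $M_n(\cA_\lambda \otimes_c \cT) = M_n(\cA_\lambda \otimes_{max} \cT)$. By functoriality of $\otimes_{max}$, $x_\lambda := ((\psi_\lambda\circ\varphi_\lambda) \otimes id)^{(n)}(x)$ is then positive in $M_n(\cS^{**}\otimes_{max}\cT)$.

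To pass to the limit and return to $\cS \otimes_{max} \cT$, I would test against states. Take a cp map $s: \cS \otimes_{max} \cT \to M_n$, or, at the scalar level, a positive functional. Such a map corresponds to a completely positive map $\Phi_s: \cS \to \cT^d$ (into the matrix-ordered dual), in the standard way that positive functionals on the max tensor product correspond to cp maps into the dual. Take the weak-$*$ continuous extension $\widetilde{\Phi}_s: \cS^{**} \to \cT^d$. It is still cp, since the positive cones of $\cS$ are weak-$*$ dense in those of $\cS^{**}$, and it defines a positive functional $\widetilde s$ on $\cS^{**}\otimes_{max}\cT$. Hence $\widetilde s(x_\lambda)\ge 0$ for every $\lambda$. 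Since $x$ is a finite sum of elementary tensors and $\widetilde{\Phi}_s$ is weak-$*$ continuous, point weak-$*$ convergence $\psi_\lambda\circ\varphi_\lambda \to j$ gives $\widetilde s(x_\lambda) \to s(x)$. So $s(x) \ge 0$ for every state, and by the Archimedean property $x \in M_n(\cS \otimes_{max}\cT)^+$. Thus $\cS\otimes_c\cT = \cS\otimes_{max}\cT$.

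\textbf{(1) implies (2).} It suffices to produce a single ucp map $\Psi: C^*_u(\cS) \to \cS^{**}$ extending $j$. We may then take the constant net $\cA_\lambda = C^*_u(\cS)$, $\varphi_\lambda$ the canonical inclusion, and $\psi_\lambda = \Psi$. This is exactly the analogue of ``(el,max)-nuclear implies WEP'' with $I(\cS)$ replaced by $C^*_u(\cS)$, and the plan is to run Han's argument from \cite{Htp} verbatim.

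By (c,max)-nuclearity, for every $\cT$ the map $\cS \otimes_{max} \cT = \cS\otimes_c\cT \subseteq C^*_u(\cS)\otimes_{max}\cT$ is a unital complete order embedding. I would apply this with $\cT$ ranging over finite-dimensional operator systems.

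Fix a finite-dimensional subspace $V$ of the predual $\cS^*$. Consider the cp map $\cS \to V^*$ obtained by restricting $j$. I would dualize it, taking $\cT$ to be a finite-dimensional operator system whose dual is identified with the relevant quotient, and realize it as a positive functional on $\cS\otimes_{max}\cT$. Krein's extension theorem, applied through the order embedding, extends this functional to a positive functional on $C^*_u(\cS)\otimes_{max}\cT$. That extension corresponds to a cp map $\Psi_V: C^*_u(\cS) \to V^*$ extending $j$ composed with the quotient to $V^*$, with norm controlled by unitality.

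The family $\{\Psi_V\}$ is compatible up to the choice of extension and uniformly bounded. A point weak-$*$ cluster point along the net of finite-dimensional $V$ then gives a ucp $\Psi: C^*_u(\cS) \to \cS^{**}$ with $\Psi\circ j$ equal to the inclusion.

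The main obstacle is this last step: making the finite-dimensional duality rigorous when $\cS$ is infinite-dimensional, since $\cS^d$ need not be an operator system. The key points are choosing the right finite-dimensional $\cT$ (a dual of a finite-dimensional quotient of $\cS^{*}$, or of a finite-dimensional subsystem of $\cS$), and checking that the Krein extensions are cp at all matrix levels rather than only positive. If this becomes delicate, the fallback is to note that the C*-BAP only requires approximate factorization. One can then take $\psi_V := \Psi_V$ composed into $\cS^{**}$ via a weak-$*$ continuous lift, indexed by $V$, and avoid taking a cluster point altogether.
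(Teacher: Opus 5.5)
Your proposal is correct, but it is organized differently from the paper. The paper routes both directions through Kavruk's characterization (Theorem 2.3 of \cite{KavrukRWI}): $\cS$ is (c,max)-nuclear if and only if the inclusion $\cS\to\cS^{**}$ extends to a ucp map $C^*_u(\cS)\to\cS^{**}$. Given that, (1)$\Rightarrow$(2) is the constant net, as in your plan. For (2)$\Rightarrow$(1), the paper extends each $\varphi_\lambda$ to a $*$-homomorphism $\pi_\lambda: C^*_u(\cS)\to\cA_\lambda$, takes a point weak-$*$ cluster point of $\psi_\lambda\circ\pi_\lambda$ to produce such an extension, and cites Kavruk again.

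Your (2)$\Rightarrow$(1) avoids relative weak injectivity entirely. You push $\cS\otimes_c\cT$ through $\cA_\lambda\otimes_c\cT=\cA_\lambda\otimes_{max}\cT$ into $\cS^{**}\otimes_{max}\cT$, then pull positivity back using weak-$*$ continuous extensions of cp maps $\cS\to\cT^d$. This is self-contained. Because it uses the $\cA_\lambda$ only through $\cA_\lambda\otimes_c\cT=\cA_\lambda\otimes_{max}\cT$, it also shows that the intermediate C*-algebras may be replaced by arbitrary (c,max)-nuclear operator systems. Your (1)$\Rightarrow$(2) reproves the needed half of Kavruk's theorem by a Lance/Han-type duality argument. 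That is more work than the citation, but it is sound.

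A few points need tightening. In (2)$\Rightarrow$(1), checking $s(x)\ge 0$ for states $s$ only gives positivity at level one. For $x\in M_n(\cS\otimes_c\cT)^+$, use cp maps $s:\cS\otimes_{max}\cT\to M_n$, whose associated maps $\Phi_s$ take values in $M_n(\cT^d)$ (equivalently, replace $\cT$ by $M_n(\cT)$). The rest of the argument is unchanged.

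In (1)$\Rightarrow$(2), the obstacles you flag disappear with the right choice of $\cT$:
\begin{itemize}
\item Let $V\subseteq\cS^*$ be spanned by finitely many states. Give it the cones $M_n(V)\cap CP(\cS,M_n)$ and take the sum of the spanning states as unit, and set $\cT=V$.
\item Then $\cT$ is a finite-dimensional operator system with $\cT^d=V^*$, and the map $a\mapsto\hat a|_V$ from $\cS$ to $\cT^d$ is cp.
\item The Krein extension of the corresponding functional to $C^*_u(\cS)\otimes_{max}\cT$ yields a cp (not merely positive) map $\Psi_V:C^*_u(\cS)\to V^*$. This follows from the same max-duality you used in the other direction.
\item The estimate $|\Psi_V(b)(v)|\le 2\|b\|$ holds for states $v\in V$. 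Every functional on $\cS$ is a combination of four states with coefficients controlled by its norm. Together these give the bounds needed for a Tychonoff cluster point $\Psi:C^*_u(\cS)\to\cS^{**}$.
\item Complete positivity of $\Psi$ is tested against the cones $M_m(\cS^*)^+$, using that these eventually lie in $M_m(V)^+$.
\end{itemize}
Drop the fallback via a cp lift $V^*\to\cS^{**}$: it is not needed, and such lifts are not available in general.
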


\begin{proof}
    By Theorem 2.3 of \cite{KavrukRWI} (and its proof), we see that $\cS$ is (c,max)-nuclear if and only if it is relatively weakly injective in $C^*_u(\cS)$, i.e. the inclusion $i^{**}: \cS \to \cS^{**}$ extends to a ucp map $\psi: C^*_u(\cS) \to \cS^{**}$. Thus, if $\cS$ is (c,max)-nuclear, we may take $\varphi: \cS \to C^*_u(\cS)$ to be the inclusion and $\psi: C^*_u(\cS) \to \cS^{**}$ to be an extension, so that $\cS$ trivially satisfies the C*-BAP. On the other hand, suppose that $\cS$ satisfies the C*-BAP with a net of maps $\varphi_{\lambda}: \cS \to A_{\lambda}$ and $\psi_{\lambda}: A_{\lambda} \to \cS^{**}$. By the universal property of $C^*_u(\cS)$, each ucp map $\varphi_{\lambda}$ extends to a $*$-homomorphism $\pi_{\lambda}: C^*_u(\cS) \to A_{\lambda}$. Since $\cS^{**}$ is a dual operator system, the net of maps $\psi_{\lambda} \circ \pi_{\lambda}: C^*_u(\cS) \to \cS^{**}$ has a weak-$*$ limit point $\rho: C^*_u(\cS) \to \cS^{**}$ in the compact space of all ucp maps from $C^*_u(\cS)$ to $\cS^{**}$. Since $\psi_{\lambda} \circ \pi_{\lambda}(s) = \psi_{\lambda} \circ \varphi_{\lambda}(s) \to s$ weak-$*$, we conclude that $\rho$ extends the inclusion of $\cS$ into $\cS^{**}$. So $\cS$ is relatively weakly injective in $C^*_u(\cS)$ and therefore $\cS$ is (c,max)-nuclear.
\end{proof}

Let $\cA$ be a dual operator system (i.e. an operator system with a Banach space predual), and suppose that $i: \cS \to \cA$ is a complete order embedding. We say that $i: \cS \to \cA$ is \textit{semi-discrete} if there exists a net of integers $d_{\lambda}$ and ucp maps $\varphi_{\lambda}: \cS \to M_{d_{\lambda}}$ and $\psi_{\lambda}: M_{d_{\lambda}} \to \cA$ such that the composition $\psi_{\lambda} \circ \varphi_{\lambda} \to i$ point weak-$*$. The next statement rephrases the injective DCAP and injective BAP in terms of semi-discrete inclusions in the case when the intermediate injective operator systems are all matrix algebras.

\[
\begin{tikzcd}
& M_{d_{\lambda}} \arrow[rd, "\psi_{\lambda}" black] \\
\mathcal{S} \arrow[rr, "i"] \arrow[ru, "\varphi_{\lambda}"]  & & \cA
\end{tikzcd}
\]

\begin{corollary} \label{Cor: semidiscrete implies nuclear}
    Suppose that $\cS$ and $\mathcal{A}$ are operator systems and $i: \cS \to \cA$ is a complete order embedding.
    \begin{enumerate}
        \item If $i$ is semidiscrete whenever $\cA = \pi(\cS)''$ and $\pi: \cS \to B(H)$ is a complete order embedding, then $\cS$ is (el,c)-nuclear.
        \item If $i$ is semidscrete whenever $\cA = \cS^{**}$, then $\cS$ is (el,max)-nuclear.
    \end{enumerate}
\end{corollary}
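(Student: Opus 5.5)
The plan is to reduce both parts to the characterizations already established. Theorem \ref{thm: DCEP iff double commutant CPAP} identifies (el,c)-nuclearity with the injective DCAP. Theorem \ref{t:el-max-nuc} identifies (el,max)-nuclearity with the injective BAP. So it suffices to observe that a semidiscrete inclusion is a special instance of the corresponding injective approximation property, with every intermediate injective system $I_\lambda$ taken to be a matrix algebra.

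For (1), fix a unital complete order embedding $\pi: \cS \to B(H)$. We identify $\cS$ with $\pi(\cS)$, so that the inclusion $j: \cS \to \cS'' = \pi(\cS)''$ is the map $i$ of the hypothesis. By assumption there are ucp maps $\varphi_\lambda: \cS \to M_{d_\lambda}$ and $\psi_\lambda: M_{d_\lambda} \to \pi(\cS)''$ with $\psi_\lambda \circ \varphi_\lambda \to j$ point weak-$*$. Each $M_{d_\lambda}$ is injective, by Arveson's extension theorem. Hence these maps witness the injective DCAP for this embedding.

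Since $\pi$ was arbitrary, $\cS$ has the injective DCAP, and Theorem \ref{thm: DCEP iff double commutant CPAP} gives that $\cS$ is (el,c)-nuclear. One can also argue directly, repeating the proof of (3)$\Rightarrow$(2) there. Extend each $\varphi_\lambda$ to $I(\cS)$ using injectivity of $M_{d_\lambda}$. Then take a point weak-$*$ cluster point of the ucp maps $\psi_\lambda \circ \varphi_\lambda'$ into the von Neumann algebra $\pi(\cS)''$. This cluster point is a ucp extension $I(\cS) \to \pi(\cS)''$ of $\pi$, which is exactly DCEP.

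For (2), the argument is the same. The semidiscrete factorization of $\cS \to \cS^{**}$ through the matrix algebras $M_{d_\lambda}$ is an instance of the injective BAP, so Theorem \ref{t:el-max-nuc} yields (el,max)-nuclearity. Equivalently, a weak-$*$ cluster point of the extended maps $I(\cS) \to \cS^{**}$ witnesses WEP.

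The only point needing care is the compactness step, namely the existence of a point weak-$*$ limit of ucp maps into $\pi(\cS)''$ and into $\cS^{**}$. This holds because both are dual operator systems, so the ucp maps into them form a BW-compact set, and a limit of ucp maps unital on $\cS$ restricts to the original inclusion. Since this step was already used in the cited theorems, I expect no real obstacle. The corollary is essentially a specialization of those results.
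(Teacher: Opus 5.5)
Your proposal is correct and matches the paper. The paper gives no separate proof of this corollary; it presents it as the special case of the injective DCAP and injective BAP (Theorems \ref{thm: DCEP iff double commutant CPAP} and \ref{t:el-max-nuc}) in which every intermediate injective system $I_\lambda$ is a matrix algebra $M_{d_\lambda}$, which is injective by Arveson's extension theorem. Your optional direct argument is the paper's proof of (3)$\Rightarrow$(2) in those theorems: extend to $I(\cS)$, then take a point weak-$*$ cluster point in the dual operator system $\pi(\cS)''$ or $\cS^{**}$.
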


The notion of semidiscrete inclusions of operator systems was studied by Le Merdy in \cite{LeMer}. Theorem 4.5 of \cite{LeMer} in particular implies that the inclusion $i: \cS \to \pi(\cS)''$ is semidiscrete if and only if the natural map $\cS \otimes_{min} T \to \pi(\cS)'' \otimes_{nor} T$ is a complete order embedding. The tensor product $\otimes_{nor}$ is defined as follows: Given a von Neumann algebra $\mathcal{M}$ and an operator system $\cT$ and $x \in \mathcal{M} \otimes \cT$ (the algebraic tensor product), we define \[ \|x\| = \sup \{ \| (\rho \cdot \psi)(x) \| : \rho: \mathcal{M} \to B(H) \text{ is a normal $*$-representation and } \psi: \cT \to \rho(M)' \text{ is ucp} \}. \]
Matrix norms are defined analogously, and the resulting operator space can be identified completely isometrically with an operator system in an obvious way. The tensor product $\otimes_{nor}$ was defined by Effros and Lance in \cite{EL} in the context of semidiscreteness for von Neumann algebras. 

The converse of Corollary~\ref{Cor: semidiscrete implies nuclear} (1) fails in general: requiring semidiscrete factorizations through matrix algebras for every representation is equivalent to the stronger property of (min,c)-nuclearity.
In fact, the $I_{\lambda}$'s in the definition of the injective DCAP can be replaced with matrix algebras if and only if the system is (min,c)-nuclear.

\begin{theorem} \label{thm: weak* (min,c)}
    The inclusion $\pi: \cS \to \pi(\cS)''$ is semidiscrete for every complete order embedding $\pi: \cS \to B(H)$ if and only if $\cS$ is (min,c)-nuclear.
\end{theorem}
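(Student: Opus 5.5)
The plan is to prove each implication separately. The forward direction goes through the UCAP of Theorem \ref{t:min-c-nuc}, and the reverse direction uses the fact that $(\min,c)$-nuclearity can be tested against the commutant of a representation.

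\textbf{(min,c)-nuclear implies semidiscrete.} Assume $\cS$ is (min,c)-nuclear and let $\pi: \cS \to B(H)$ be a complete order embedding. By Theorem \ref{t:min-c-nuc}, $\cS$ has the UCAP. By the lemma following that theorem, it also has the C*-cover AP. So there are ucp maps $\varphi_\lambda: \cS \to M_{n_\lambda}$ and $\psi_\lambda: M_{n_\lambda} \to C^*(\pi(\cS))$ with $\psi_\lambda\circ\varphi_\lambda \to \pi$ in point-norm. Since $C^*(\pi(\cS)) \subseteq \pi(\cS)''$, and norm convergence implies weak-$*$ convergence, the inclusion $\cS \to \pi(\cS)''$ is semidiscrete.

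\textbf{Semidiscrete implies (min,c)-nuclear.} Assume every such inclusion is semidiscrete. By \cite{KPTT1}, $\cS \otimes_c \cT$ embeds in $\cS \otimes_{\max} C^*_u(\cT)$, and (min,c)-nuclearity is equivalent to $\cS\otimes_{\min}\cA = \cS\otimes_{\max}\cA$ for all C*-algebras $\cA$. So it suffices to show: for every pair of ucp maps $\alpha:\cS\to B(K)$ and $\beta: \cA \to B(K)$ with commuting ranges, the product map $\alpha\cdot\beta$ is cp on $\cS\otimes_{\min}\cA$.

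To get a single embedding that carries all such pairs, take $\pi = \bigoplus \alpha_i$, summing over a sufficiently large family of ucp maps on $\cS$ that includes a complete order embedding. Arrange it, after Stinespring dilation of $\beta$ and compression, so that any given commuting pair $(\alpha,\beta)$ is realized as a compression of a pair $(\tilde\alpha,\tilde\beta)$ with $\tilde\alpha$ factoring through $\pi(\cS)''$ by a normal $*$-representation $\rho$ and $\tilde\beta(\cA)\subseteq\rho(\pi(\cS)'')'$.

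This reduction is the main obstacle. The natural first attempt is to use Arveson's commutant lifting: dilate $\beta$ to a $*$-homomorphism $\sigma$ on $K'\supseteq K$. The commuting of $\alpha(\cS)$ with $\beta(\cA)$ then lifts to a cp map $\tilde\alpha: \cS\to\sigma(\cA)'$ compressing to $\alpha$. Next, extend $\tilde\alpha$ to the von Neumann algebra $\pi(\cS)''$ normally. To do this, I would take $\pi$ to be the universal-type direct sum of all ucp maps on $\cS$ (up to cardinality) and use the bidual: normal representations of $\pi(\cS)''$ cover all ucp maps of $\cS$ after weak-$*$ extension.

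Once that setup is in place, the conclusion follows from Le Merdy's Theorem 4.5 \cite{LeMer}: semidiscreteness of $\cS\to\pi(\cS)''$ gives $\cS\otimes_{\min}\cT \to \pi(\cS)''\otimes_{nor}\cT$ completely order embedding. By definition of $\otimes_{nor}$, every pair (normal $\rho$, ucp $\psi$ into $\rho(\pi(\cS)'')'$) yields a cp product map on $\cS\otimes_{\min}\cT$. Compressing gives $\alpha\cdot\beta$ cp, so $\cS\otimes_{\min}\cT=\cS\otimes_c\cT$.

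Alternatively, and more cleanly, I would aim to mimic Theorem \ref{thm: DCEP iff double commutant CPAP} together with the Han--Paulsen argument in the weak-$*$ setting. For $x\in M_n(\cS\otimes_{\min}\cT)^+$, apply $(\psi_\lambda\circ\varphi_\lambda)\otimes id$, which factors through $M_{d_\lambda}\otimes_{\min}\cT=M_{d_\lambda}\otimes_{\max}\cT$. Then evaluate against commuting pairs in which $\psi$ ranges in $\pi(\cS)'$, and pass to the weak-$*$ limit. Positivity survives the limit because weak-$*$ limits of positive operators are positive. The remaining point is that it suffices to test commuting pairs whose first map is $\pi$ itself. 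This holds if $\pi$ is chosen as the direct sum over all ucp maps $\alpha$ (each paired with commuting $\beta$), since $\beta$ can be amplified diagonally into $\pi(\cS)'$; this is the step I would check carefully.
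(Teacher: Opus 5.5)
Your forward direction is the paper's argument. For the converse, your first route is also the paper's: universal representation, $\pi_u(\cS)''=C^*_u(\cS)^{**}$, Le Merdy's Theorem 4.5, and the definition of $\otimes_{nor}$. The step you call the main obstacle is the one line the paper simply asserts. It needs neither the reduction to C*-algebras nor commutant lifting. If $\alpha:\cS\to B(K)$ and $\beta:\cT\to B(K)$ have commuting ranges, then $\beta(\cT)'$ is already a von Neumann algebra containing $\alpha(\cS)$. So the $*$-homomorphism $C^*_u(\cS)\to B(K)$ extending $\alpha$ has range $C^*(\alpha(\cS))\subseteq\beta(\cT)'$, and its normal extension $\tilde\pi_\alpha$ to $C^*_u(\cS)^{**}$ lands in $\alpha(\cS)''\subseteq\beta(\cT)'$. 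This is also the fact you left implicit when extending $\tilde\alpha$: the extension must stay inside $\sigma(\cA)'$.

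Your second route is genuinely different, and it is a complete proof once the flagged step is stated correctly. For a single commuting pair $(\alpha,\beta)$, put $\pi=\alpha\oplus\pi_0$ with $\pi_0$ any complete order embedding, and $\tilde\beta=\beta\oplus f(\cdot)I$ for a state $f$ on $\cT$. Then $\pi$ is a complete order embedding and $\tilde\beta(\cT)\subseteq\pi(\cS)'$. Each $(\psi_\lambda\circ\varphi_\lambda)\cdot\tilde\beta$ is cp on $\cS\otimes_{\min}\cT$, because it factors through $M_{d_\lambda}\otimes_{\min}\cT=M_{d_\lambda}\otimes_c\cT$. The $\sigma$-weak limit, compressed to $K$, is $\alpha\cdot\beta$. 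No direct sum over all ucp maps is needed. Also, a single $\beta$ amplified diagonally would not commute with the other summands; each summand needs its own partner.

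As for what each approach buys: yours is self-contained and treats arbitrary $\cT$ directly, without passing through C*-nuclearity. In effect it proves the implication of Le Merdy's theorem that the paper actually uses (semidiscrete $\Rightarrow$ the min-to-nor map is an order embedding), which is the elementary one. The paper's argument, in exchange, uses the hypothesis only for the single embedding $\cS\subseteq C^*_u(\cS)^{**}$, whereas yours invokes it for an embedding adapted to each pair. You can get both advantages by running your limit argument on $\tilde\pi_\alpha\circ\psi_\lambda\circ\varphi_\lambda$ with the universal embedding. This works since $\tilde\pi_\alpha$ is normal and takes values in $\beta(\cT)'$.
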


\begin{proof}
    If $\cS$ is (min,c)-nuclear, then it has the UCAP. It follows from the universal property of $C^*_u(\cS)$ that there exist nets of maps $\varphi_{\lambda}: \cS \to M_{n_{\lambda}}$ and $\psi_{\lambda}: M_{n_{\lambda}} \to C^*(\pi(\cS)) \subseteq \pi(\cS)''$ converging point-norm to the inclusion $\pi: \cS \to \pi(\cS)''$ for every complete order embedding $\pi: \cS \to B(H)$.

    Conversely, suppose $\pi: \cS \to \pi(\cS)''$ is semidiscrete for every complete order embedding $\pi: \cS \to B(H)$. Consider the universal representation $\pi_u: \cS \to C^*_u(\cS)$. Then $\pi_u(\cS)'' = C^*_u(\cS)^{**}$. By Le Merdy's Theorem 4.5 \cite{LeMer}, for any operator system $\cT$, the natural map $\cS \otimes_{min} \cT \to C^*_u(\cS)^{**} \otimes_{nor} T$ is a unital complete order embedding. However, by the definition of the tensor product $C^*_u(\cS)^{**} \otimes_{nor} \cS$, we also have that the natural map $C^*_u(\cS) \otimes_c \cT \to C^*_u(\cS)^{**} \otimes_{nor} \cT$ is a unital complete order embedding, since ucp maps on $C^*_u(\cS)$ extend to normal ucp maps on $C^*_u(\cS)^{**}$. Since the image of the map $\cS \otimes_{min} \cT \to C^*_u(\cS)^{**} \otimes_{nor} \cT$ is exactly $\cS \otimes_c \cT \subseteq C^*_u(\cS) \otimes_c \cT$, it follows that $\cS$ is (min,c)-nuclear.
\end{proof}

\subsection{Self-nuclearity} 

We conclude by considering ``self-nuclearity'' for finite-dimensional operator systems $\cS$. Given $\mathcal S$ then define the relative order structure $\tau$ as that induced via the embedding \[
\mathcal S \otimes_{\tau} \mathcal S \hookrightarrow \mathcal S \otimes_{\text{max}} \mathcal A,
\] where $\tau$ is the relative operator system structure, and $\mathcal A$ is an operator system containing $\mathcal S$, completely order isomorphically. 

\begin{theorem}
A finite-dimensional operator system $\mathcal S$ satisfies \[\mathcal S \otimes_{\text{min}} \mathcal S = S \otimes_{\tau} \mathcal S\] if and only if for every $n \in \mathbb N$ and completely positive map $u: \mathcal S^d \to  M_n(\mathcal S)$, there exists completely positive maps $\varphi_\lambda: \mathcal S^d \to  M_{n_\lambda}, \psi_\lambda:  M_{n_\lambda} \to M_n(\mathcal A)$ such that \[
\psi_\lambda \circ \varphi_\lambda \to u: \cS^d \to M_n(\cS) \subseteq M_n(\cA),
\] where convergence is in the point-norm sense. 
\end{theorem}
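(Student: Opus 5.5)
The plan is to reduce the statement to the Han--Paulsen Theorem \ref{thm: Han-Paulsen}, applied to each cp map $u$ separately, using the duality between $\cS\otimes_{\min}\cS$ and cp maps $\cS^d\to M_n(\cS)$. Since $\cS$ is finite-dimensional, $\cS^{dd}=\cS$. By the identification recalled in the Duality subsection, $M_n(\cS\otimes_{\min}\cS)^+ \cong (M_n(\cS)\otimes_{\min}\cS^{dd})^+$ is exactly the set of cp maps $u:\cS^d\to M_n(\cS)$. Here the tensor $x=\sum_i X_i\otimes s_i$ with $X_i\in M_n(\cS)$ corresponds to $u_x(f)=\sum_i f(s_i)X_i$.

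Likewise, $M_n(\cS\otimes_\tau\cS)^+ = M_n(\cS\otimes_\tau\cS)\cap M_n(\cS\otimes_{\max}\cA)^+$. I will use the symmetry of $\max$ to place $\cA$ in whichever slot is convenient, and write the relevant object as $M_n(\cA)\otimes_{\max}\cS$. The key reformulation is that $x\in M_n(\cS\otimes_\tau\cS)^+$ if and only if the map $u_x$, regarded as an element of $\cS\otimes M_n(\cA)$, is positive in $M_n(\cA)\otimes_{\max}\cS$. Because $\cS^{dd}=\cS$, this is positivity in $(\cS^d)^d\otimes_{\max}M_n(\cA)$.

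The central step is a single-map version of Han--Paulsen. For a cp map $u:\cS^d\to\cB$ with $\cS$ finite-dimensional, the corresponding tensor $\hat u\in\cS\otimes\cB$ is positive in $(\cS^d)^d\otimes_{\max}\cB$ if and only if $u$ is a point-norm limit of maps factoring through matrix algebras. To prove this I will exploit that $\cS^d$ is finite-dimensional with a faithful state as unit. After a harmless rescaling I can assume $u$ is ucp up to a perturbation: replace $u$ by $u+\epsilon\,\omega\otimes 1$ and normalize as in Lemma \ref{lem: inverse approx cp}.

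For this single-map version, the forward direction (factorization implies max-positivity) goes as follows. A cp map $\cS^d\to M_m$ corresponds to a positive element of $\cS\otimes_{\min}M_m=\cS\otimes_{\max}M_m$, since matrix algebras are nuclear. Composing with a cp map $\psi:M_m\to\cB$ and using functoriality of $\max$, the factored maps are $\max$-positive. Closedness of the max cone under norm limits, after adding $\epsilon(1\otimes 1)$ and using the Archimedean property, then passes positivity to $u$.

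The reverse direction (max-positivity implies approximate factorization) is the main obstacle. My first attempt is to unwind the definition of the max cone: $\hat u+\epsilon\, 1\otimes 1=\alpha^*(P\otimes Q)\alpha$ with $P\in M_p(\cS)^+$ and $Q\in M_q(\cB)^+$. Then $P$ gives a cp map $\cS^d\to M_p$, namely $f\mapsto f^{(p)}(P)$. The pair $Q$ and $\alpha$ gives a cp map $M_p\to\cB$ of the form $Y\mapsto \alpha^*(Y\otimes Q)\alpha$, which is cp because $M_p\otimes_{\max}\cB=M_p\otimes_{\min}\cB$ contains $Y\otimes Q\ge 0$ for $Y\ge0$. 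The composition of these two maps is exactly the map determined by $\hat u+\epsilon\,1\otimes 1$, i.e.\ $u+\epsilon\, e_\cS(\cdot)1$, where $e_\cS$ is evaluation at the unit. Letting $\epsilon\to0$ gives the required net. This yields an exact factorization up to $\epsilon$, so the main care needed is bookkeeping of the identifications, including the transpose and slot order.

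Finally, assemble the two directions. The equality $\cS\otimes_{\min}\cS=\cS\otimes_\tau\cS$ holds if and only if every $x\in M_n(\cS\otimes_{\min}\cS)^+$ is positive in $M_n(\cS\otimes_{\max}\cA)$, for all $n$. By the first paragraph, this says that every cp map $u:\cS^d\to M_n(\cS)$, viewed as a map into $M_n(\cA)$, is max-positive. By the single-map version with $\cB=M_n(\cA)$, this is equivalent to the stated approximate factorization.
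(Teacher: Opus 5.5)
Your proposal is correct and follows essentially the paper's argument. Both use the duality identification of $M_n(\cS\otimes_{\min}\cS)^+$ with cp maps $u:\cS^d\to M_n(\cS)$. One direction reads a decomposition $\hat u+\epsilon\,e\otimes 1=\alpha^*(P\otimes Q)\alpha$ as an exact factorization $\cS^d\to M_p\to M_n(\cA)$, and the other uses nuclearity of matrix algebras together with closedness of the $\max$ cone. The differences are cosmetic: you push the tensor of $\varphi_\lambda$ in $\cS\otimes_{\min}M_m=\cS\otimes_{\max}M_m$ through $\mathrm{id}\otimes\psi_\lambda$, where the paper instead decomposes the tensor of $\psi_\lambda$. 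Your slot-order swap is justified by flip-invariance of $\cS\otimes_{\min}\cS$ (so $\min=\tau$ does not depend on which factor is embedded in $\cA$) rather than by symmetry of $\max$ alone.
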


\begin{proof}
    First assume $\mathcal S \otimes_{min} \mathcal S = \mathcal S \otimes_\tau \mathcal S$, fix $n \in \mathbb N$, and let $u: \mathcal S^d \to M_n(\mathcal S)$ be a completely positive map. Denote the induced tensor by $t \in \mathcal S \otimes   M_n(\mathcal S)$. Complete positivity of $u$ implies $t \in [\mathcal S \otimes_{min} M_n(\mathcal S)]^+$, and by our assumptions we have \[
t \in [\mathcal S \otimes_\tau M_n(\mathcal S)]^+ \subset [ \mathcal S \otimes_{max} M_n(\mathcal A)]^+.
    \] Let $\e >0$ and express $t + \e I_n$ as $\hat{a}(P \otimes Q)\hat{a}^*$, where $P \in M_{d_1}(\mathcal S)^+, Q \in M_{d_2}(M_n(\mathcal A))^+$, and $\hat{a} = I_n \otimes a, a \in M_{1,d_1d_2}$. We then define the maps \[
\varphi_\epsilon: S^d \to M_{d_1},
\psi_\e: M_{d_1} \to M_n(\mathcal A), 
    \] where $\varphi_\epsilon$ is the induced map from $P$, and $\psi_\epsilon$ is given as $\rho \mapsto \hat{a}(\rho \otimes Q)a^*$. Without loss of generality we may assume $\varphi_\epsilon$ is a complete contraction, by scaling appropriately and combining scaling factor with the scalar matrices $a$. Then $\psi_\epsilon \circ \varphi_\epsilon: \mathcal S^d \to M_n(\mathcal A)$ is the desired approximation of $u$. 
%\newline 

Conversely, assume one has the approximate factorization property, and let $t \in M_n(\mathcal S \otimes_{min} \mathcal S)^+$. Implementing suitable min-tensor properties, we realize the tensor $t$ in the cone $[\mathcal S \otimes_{min} M_n(\mathcal S)]^+$. 
We claim that $t \in [\mathcal S \otimes_{\tau} M_n(\mathcal S)]^+$. Let $u: \mathcal S^d \to M_n(\mathcal S)$ denote the induced completely positive operator. Let $(\varphi_i), (\psi_i), \varphi_i: \mathcal S^d \to M_{n_i}, \psi_i: M_{n_i} \to M_n(\mathcal A)$ denote the nets of completely positive maps such that $\psi_i\circ \varphi_i \to u$ point-norm. As in the first part of the proof, we may take $\varphi_i: \mathcal S^d \to M_{n_i}$ to be a complete contraction. Let $t_i \in M_{n_i}^d \otimes M_n(\mathcal A)$ denote the tensor corresponding to $\psi_i$. Using the fact that $M_{n_i}^d$ is completely order isomorphic to $M_{n_i}$, and nuclearity of matrix algebras, we observe $t_i \in [M_{n_i}^d \otimes_{max} M_n(\mathcal A)]^+$. Thus, given $\epsilon > 0$ then we may express \[
t_i + \epsilon I = a(P \otimes Q)a^*, \quad P \in M_{d_1}(M_{n_i}^d)^+, \quad Q \in M_{d_2}(M_n(\mathcal A))^+,
\] and where $a$ is a rectangular scalar matrix. Let $u_\psi: M_{n_i} \to M_{d_1}$ denote the completely positive map corresponding to $P$, and consider the composition \[
\xi_i:= u_i \circ \varphi_i: \mathcal S^d \to M_{n_i} \to M_{d_1}. 
\] Finally, define the completely positive map \[
\Theta: \mathcal S^d \to M_n(\mathcal A), \quad s^d \mapsto a(\xi_i(s^d) \otimes Q)a^*. 
\]  It then follows via construction that \[
\psi_i \circ \varphi_i = \Theta - \epsilon I = a(\xi_i \otimes Q)a^* - \epsilon I.
\] Taking a limit, we conclude that $t + \epsilon I \in [\cS \otimes_{max} M_n(\cA)]^+$. This shows the desired approximation yielding $t \in [\mathcal S \otimes_\tau M_n(\mathcal S)]^+$, and we conclude \[
\text{id}: \mathcal S \otimes_{min} \mathcal S \to \mathcal S \otimes_\tau \mathcal S,
\] is completely positive. \qedhere
\end{proof}

\begin{remark}
    By the commutativity of the maximal tensor product and symmetry, the operator system structure induced by the inclusion $\cS \otimes_{\tau} \cS \subseteq \cS \otimes_{max} \cA$ agrees with the one induced by the inclusion $\cS \otimes_{\tau} \cS \subseteq \cA \otimes_{max} \cS$.
\end{remark}

\begin{corollary} \label{cor: min-c min-el self nuc}
    Given a finite-dimensional operator system $\mathcal S$, then \[
\mathcal S \otimes_{min} \mathcal S = \mathcal S \otimes_c \mathcal S,
    \] if and only if for every $n \in \mathbb N$ and every completely positive map $u: \mathcal S^d \to M_n(\mathcal S)$ there exists nets of completely positive operators $(\varphi_i)_i, (\psi_i)_i, \varphi_i: \mathcal S^d \to M_{n_i}, \psi_i: M_{n_i} \to M_n(C_u^*(\mathcal S))$ such that $\psi_i \circ \varphi_i \to u$ in the point-norm sense. Similarly, \[ \cS \otimes_{min} \cS = \cS \otimes_{el} \cS \] if and only if for every $n \in \mathbb N$ and every completely positive map $u: \mathcal S^d \to M_n(\mathcal S)$ there exists nets of completely positive operators $(\varphi_i)_i, (\psi_i)_i, \varphi_i: \mathcal S^d \to M_{n_i}, \psi_i: M_{n_i} \to M_n(I(\cS))$ such that $\psi_i \circ \varphi_i \to u$ in the point-norm sense.
\end{corollary}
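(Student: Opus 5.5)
The plan is to obtain both equivalences as specializations of the preceding theorem, which characterizes $\cS \otimes_{min} \cS = \cS \otimes_{\tau} \cS$ for the relative structure $\tau$ induced by $\cS \otimes_{\tau} \cS \hookrightarrow \cS \otimes_{max} \cA$. What must be checked is that for suitable choices of $\cA$ this relative structure is exactly $c$, respectively $el$.

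For the first statement, take $\cA = C^*_u(\cS)$. By the inclusion recorded in the preliminaries (from \cite{KPTT2}), $\cS \otimes_c \cT \subseteq \cS \otimes_{max} C^*_u(\cT)$ completely order isomorphically. Applying this with $\cT = \cS$ shows that the relative structure $\tau$ coming from $\cS \otimes \cS \subseteq \cS \otimes_{max} C^*_u(\cS)$ coincides with $\otimes_c$. The preceding theorem then gives the first equivalence verbatim, with $M_n(\cA) = M_n(C^*_u(\cS))$.

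For the second statement, take $\cA = I(\cS)$. By definition, $\cS \otimes_{el} \cS \subseteq I(\cS) \otimes_{max} \cS$, where the injective envelope sits on the left factor. The preceding theorem places $\cA$ on the right, as $\cS \otimes_{max} \cA$. The remark after the theorem handles this: by commutativity of the maximal tensor product, the structure induced by $\cS \otimes \cS \subseteq \cS \otimes_{max} \cA$ agrees with the one induced by $\cS \otimes \cS \subseteq \cA \otimes_{max} \cS$, once we compose with the flip $x \otimes y \mapsto y \otimes x$ on $\cS \otimes \cS$. So with $\cA = I(\cS)$, the relative structure $\tau$ is $\otimes_{el}$ up to the flip, or equivalently $\otimes_{er}$ in the original order, since $\cS \otimes_{el} \cT \cong \cT \otimes_{er} \cS$.

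The main obstacle is therefore the bookkeeping of which factor is enlarged. The required statement is $\cS \otimes_{min} \cS = \cS \otimes_{el} \cS$. Because the min tensor product is symmetric, applying the flip shows this is equivalent to $\cS \otimes_{min} \cS = \cS \otimes_{er} \cS$, and $er$ is exactly the relative structure from $\cS \otimes_{max} I(\cS)$. So both identities are equivalent, and the preceding theorem with $\cA = I(\cS)$ yields the stated factorization through $M_n(I(\cS))$.

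I would finally double-check that the preceding theorem's proof uses only that $\cS \subseteq \cA$ is a unital complete order embedding. It does, so both choices of $\cA$ are admissible.
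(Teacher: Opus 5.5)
Your proposal is correct and follows the paper's intended route: the paper gives no separate proof of this corollary, and treats it as the specialization of the preceding theorem to $\cA = C^*_u(\cS)$ (via $\cS \otimes_c \cS \subseteq \cS \otimes_{max} C^*_u(\cS)$) and to $\cA = I(\cS)$ (via the subsequent remark). Your second case is more careful than that remark, which just says the two induced structures ``agree'' without mentioning the flip. As you observe, $\cA = I(\cS)$ literally gives $\otimes_{er}$, and it is the symmetry of $\otimes_{min}$ together with $\cS \otimes_{el} \cS \cong \cS \otimes_{er} \cS$ under the flip that yields the stated $\otimes_{el}$ condition.
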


\begin{remark}
    \emph{The topic of self-nuclearity was studied by Kavruk in the context of Kirchberg's QWEP conjecture. Kavruk showed (in Theorem 5.10 of \cite{Kavruk}) that Kirchberg's conjecture is true if and only if $S_2 \otimes_{min} S_2 = S_2 \otimes_c S_2$, where $S_2$ denotes the five-dimensional operator system generated by the unitary generators of the free group $G=\mathbb{F}_2$ in $C^*(G)$. By \cite{MIP}, Kirchberg's conjecture is false and therefore $S_2 \otimes_{min} S_2 \neq S_2 \otimes_c S_2$. By Corollary \ref{cor: min-c min-el self nuc}, $S_2$ lacks the approximation property described in the Corollary.}
\end{remark}

\section*{Acknowledgments}

This work was supported by the American Institute of Mathematics through the AIM SQuaRE workshop ``Approximation properties for operator systems and matrix convex sets.'' The first and third authors were partially supported by the Binational Science Foundation startup grant BSF-202416. The fourth author was partially supported by NSF grant DMS-2055155. Part of this work was completed during the third author's visit to the Mittag-Leffler Institute workshop ``Operator algebras and quantum information theory.'' The third author thanks Kristin Courtney for a helpful discussion contributing to Theorem \ref{t:c-max-nuc}. Finally, we thank Adam Dor-On and Matthew Kennedy for many helpful discussions on this work, including a key observation needed to realize Theorem \ref{thm: weak* (min,c)}.
\color{black}

\bibliographystyle{alpha}

\end{document}